\newcommand\ie{\textit{i.e.}\ }
\newcommand\eg{\textit{e.g.}\ }
\newtheorem{theorem}[subsection]{Theorem}
\newtheorem{subtheorem}[subsubsection]{Theorem}
\newtheorem{sublemma}[subsubsection]{Lemma}
\newtheorem{subcorollary}[subsubsection]{Corollary}
\theoremstyle{definition}
\newtheorem{definition}[subsection]{Definition}
\newtheorem{subdefinition}[subsubsection]{Definition}
\newtheorem{remark}[subsection]{Remark}
\newtheorem{subremark}[subsubsection]{Remark}
\newcommand\testshape{family=\f@family; series=\f@series; shape=\f@shape.}
\def\myemphInternal#1{\if n\f@shape%
\begingroup\itshape #1\endgroup\/%
\else\begingroup\sf\itshape #1\endgroup%
\fi}
\def\myemph{\futurelet\testchar\MaybeOptArgmyemph}
\def\MaybeOptArgmyemph{\ifx[\testchar \let\next\OptArgmyemph
                 \else \let\next\NoOptArgmyemph \fi \next}
\def\OptArgmyemph[#1]#2{\index{#1}\myemphInternal{#2}}
\def\NoOptArgmyemph#1{\myemphInternal{#1}}
\newcommand\id{\mathrm{id}}          
\newcommand\Int[1]{\mathrm{Int}(#1)}       
\newcommand\eps{\varepsilon}
\newcommand\bR{\mathbb{R}}
\newcommand\bZ{\mathbb{Z}}
\newcommand\Homeo{\mathcal{H}}      
\newcommand\Map{\mathrm{Map}}       
\newcommand\HomeoId{\Homeo_{\id}}   
\newcommand\Cr[1]{\mathcal{C}^{#1}}
\newcommand\Crm[3]{\Cr{#1}\!\left(#2,#3\right)}
\newcommand\Cont[2]{\Crm{0}{#1}{#2}}                         
\newcommand\Aman{A}
\newcommand\Bman{B}
\newcommand\Cman{C}
\newcommand\Fman{F}
\newcommand\Gman{G}
\newcommand\Kman{{\color{Blue}K}}
\newcommand\Lman{L}
\newcommand\Mman{M}
\newcommand\Tman{T}
\newcommand\Uman{U}
\newcommand\Vman{V}
\newcommand\Yman{Y}
\newcommand\dif{h}
\newcommand{\predif}{\dif_0}
\newcommand\kdif{k}
\newcommand\dComp{\Cman}
\newcommand{\Cl}[2][]{\mathrm{Cl}_{#1}\!\left({#2}\right)}
\newcommand{\strip}{S}
\newcommand{\stripSurf}{Z}
\newcommand{\preStripSurf}{\stripSurf_0}
\newcommand{\Partition}{\Delta}     
\newcommand\PartitionReg{\Partition_{reg}}
\newcommand\PartitionSpec{\Partition_{spec}}
\newcommand\PartitionSing{\Partition_{sing}}
\newcommand{\leaf}{\omega}          
\newcommand\hcl[1]{\mathrm{hcl}(#1)}  
\newcommand\hclA[2]{\mathrm{hcl}_{#2}(#1)}
\newcommand{\stInd}{{\lambda}}
\newcommand{\StInd}{\Lambda}
\newcommand{\bdGlueInd}{{\gamma}}
\newcommand{\BdGlueInd}{\Gamma}
\newcommand{\qmap}{q}
\newcommand{\pr}[1][]{p_{#1}}
\newcommand{\bdX}{X}
\newcommand{\bdY}{Y}
\newcommand{\HS}[1][\Partition]{\Homeo(\stripSurf,#1)}
\newcommand\HZS{\Homeo_{\id}(\stripSurf,\Partition)}
\newcommand\HY{\Homeo(\Yman)}
\newcommand\HZY{\Homeo_{\id}(\Yman)}
\newcommand\ahom{\psi}
\newcommand\specPoints{\mathcal{S}}
\newcommand\sat[1]{\mathrm{Sat}(#1)}
\newcommand\ptx{x}
\newcommand\Fmap{F}
\newcommand\Frestr[1]{\Fmap_{#1}}
\newcommand\Gmap{G}
\newcommand\Grestr[1]{\Gmap_{#1}}
\newcommand\tHomeoId[1]{\widetilde{\mathcal{H}}_{\id}(#1)}
\newcommand\EXP[1]{E_{#1}}
\newcommand\invHZY{\mathcal{Q}}
\newcommand\branch{special}
\newcommand\maps[2]{\Map(#1,#2)}
\newcommand\fmu{\beta}
\newcommand\flambda{\alpha}
\newcommand\bSide[2]{\partial_{#1}#2}
\newcommand\Circle{S^1}
\newcommand\ori{\mathbf{or}}
\begin{document}

\label{first_page:0}

\author{Sergiy Maksymenko}
\email{maks@imath.kiev.ua}
\address{Institute of Mathematics of NAS of Ukraine, Tereshchenkivska str. 3, Kyiv, 01004, Ukraine}

\author{Eugene Polulyakh}
\email{polulyah@imath.kiev.ua}
\address{Institute of Mathematics of NAS of Ukraine, Tereshchenkivska str. 3, Kyiv, 01004, Ukraine}

\title[Homeotopy groups of leaf spaces of one-dimensional foliations]{Homeotopy groups of leaf spaces of one-dimensional foliations on non-compact surfaces with non-compact leaves}

\begin{abstract}
Let $Z$ be a non-compact two-dimensional manifold obtained from a family of open strips $\mathbb{R}\times(0,1)$ with boundary intervals by gluing those strips along some pairs of their boundary intervals.
Every such strip has a natural foliation into parallel lines $\mathbb{R}\times t$, $t\in(0,1)$, and boundary intervals which gives a foliation $\Delta$ on all of $Z$.
Denote by $\mathcal{H}(Z,\Delta)$ the group of all homeomorphisms of $Z$ that maps leaves of $\Delta$ onto leaves and by $\mathcal{H}(Z/\Delta)$ the group of homeomorphisms of the space of leaves endowed with the corresponding compact open topologies.
Recently, the authors identified the \textit{homeotopy} group $\pi_0\mathcal{H}(Z,\Delta)$ with a group of automorphisms of a certain graph $G$ with additional structure which encodes the combinatorics of gluing $Z$ from strips.
That graph is in a certain sense dual to the space of leaves $Z/\Delta$.

On the other hand, for every $h\in\mathcal{H}(Z,\Delta)$ the induced permutation $k$ of leaves of $\Delta$ is in fact a homeomorphism of $Z/\Delta$ and the correspondence $h\mapsto k$ is a homomorphism $\psi:\mathcal{H}(\Delta)\to\mathcal{H}(Z/\Delta)$.
The aim of the present paper is to show that $\psi$ induces a homomorphism of the corresponding \textit{homeotopy} groups $\psi_0:\pi_0\mathcal{H}(Z,\Delta)\to\pi_0\mathcal{H}(Z/\Delta)$ which turns out to be either injective or having a kernel $\bZ_2$.
This gives a dual description of $\pi_0\mathcal{H}(Z,\Delta)$ in terms of the space of leaves.
\end{abstract}

%
%

\keywords{Foliation; striped surface; space of leaves}

\subjclass[2020]{%
 57R30,  
 55P15,  
 57K20
}

\maketitle

\section{Introduction}\label{sect:intro}
In the present paper we study foliations on non-compact surfaces similar to foliations on the plane (considered by W.~Kaplan~\cite{Kaplan:DJM:1940, Kaplan:DJM:1941}, see also~\cite{GodbillonReeb:EM:1966, Godbillon:EM:1972}) with ``sufficiently regular behavior''.
Those foliations are studied in a series of papers by S.~Maksymenko, Ye.~Polulyakh, and Yu.~Soroka, see \cite{MaksymenkoPolulyakh:PGC:2015, MaksymenkoPolulyakh:MFAT:2016, MaksymenkoPolulyakh:PGC:2016, Soroka:MFAT:2016, MaksymenkoPolulyakhSoroka:PICG:2016, Soroka:UMJ:2017, MaksymenkoPolulyakh:PGC:2017}.
Our main result (Theorem~\ref{th:ker_ahom0}) relates the groups of isotopy classes of foliated (\ie sending leaves to leaves) self-homeomorphisms of such surfaces with the groups of isotopy classes of homeomorphisms of the corresponding space of leaves (being in those cases non-Hausdorff one-dimensional manifolds).
This extends results by Yu.~Soroka~\cite{Soroka:UMJ:2017}.

Let $\stripSurf$ be a surface endowed with a one-dimensional foliation $\Partition$.
It will be convenient to say that the pair $(\stripSurf,\Partition)$ is a \myemph{foliated} surface.
For an open subset $\Uman\subset\stripSurf$ denote by $\Partition|_{\Uman}$ the induced foliation on $\Uman$ consisting of connected components of non-empty intersections $\leaf\cap\Uman$ for all $\leaf\in\Partition$.

Let $\Yman = \stripSurf/\Partition$ be the set of all leaves, and $\pr:\stripSurf\to\Yman$ be the natural projection associating to each $x\in\stripSurf$ the leaf of $\Partition$ containing $x$.
Endow $\Yman$ with a quotient topology, so a subset $\Aman\subset\Yman$ is open if and only if $\pr^{-1}(\Aman)$ is open in $\stripSurf$.
Notice that usually spaces of leaves of foliations are non-Hausdorff.

By sthe \myemph{saturation} $\sat{\Uman}$ of a subset $\Uman\subset\stripSurf$ we mean the union of all leaves of $\Partition$ intersecting $\Uman$.
Evidently $\sat{\Uman} = \pr^{-1}(\pr(\Uman))$.
A subset $\Uman\subset\stripSurf$ is called \myemph{saturated} (with respect to a foliation $\Partition$) whenever $\Uman=\sat{\Uman}$.

Let $\Kman$ be a one-dimensional manifold.
Then by a \myemph{trivial} foliation on $\bR\times\Kman$ we will mean a foliation by lines $\{\bR\times y\}_{y\in\Kman}$.

A homeomorphism $h: \stripSurf \to \stripSurf$ is said to be \myemph{leaf-preserving} if $h(\leaf)=\leaf$ for each leaf $\leaf\in\Partition$.
Also, a homeomorphism $h: \stripSurf_1 \to \stripSurf_2$ between foliated surfaces $(\stripSurf_1,\Partition_1)$ and $(\stripSurf_2,\Partition_2)$ is \myemph{foliated} if $h(\leaf)\in\Partition_2$ for each $\leaf\in\Partition_1$.
In particular, every leaf-preserving homeomorphism is foliated.

Let $\HS$ be the group of all foliated self-homeomorphisms of $\stripSurf$ and $\HY$ be the group of homeomorphisms of $\Yman$ endowed with the corresponding compact open topologies.
Let also $\HZS$ be the identity path component of $\HS$ consisting of all homeomorphisms of $\stripSurf$ isotopic to $\id_{\stripSurf}$ in $\HS$, and $\HZY$ be the identity path component of $\HY$.
Evidently, $\HZS$ and $\HZY$ are normal subgroups of the corresponding groups $\HS$ and $\HY$ with respect to composition operation.
Then the quotients
\begin{align*}
	\pi_0\HS &\cong \HS/\HZS, &
	\pi_0\HY &\cong \HY/\HZY,
\end{align*}
\ie the groups of path components of $\HS$ and $\HY$, are called \myemph{homeotopy groups} of the foliation $\Partition$ and $\Yman$ respectively.

Notice that by definition each $\dif\in\HS$ induces a permutation of leaves of $\Partition$, and therefore it induces a bijection $\kdif:\Yman\to\Yman$ making commutative the following diagram:
\begin{equation}\label{equ:psi_diagram}
\xymatrix{
\stripSurf \ar[r]^{\dif} \ar[d]_{\pr} &
\stripSurf \ar[d]^{\pr} \\
\Yman \ar[r]^{\kdif} & \Yman
}
\end{equation}

One easily checks that $\kdif$ is a homeomorphism of $\Yman$ and that the correspondence $\dif\mapsto\kdif$ is a homomorphism of groups
\begin{equation}\label{equ:ahom}
    \ahom: \HS \to \HY, \qquad \ahom(\dif) = \kdif.
\end{equation}

\begin{definition}
We will say that a foliated surface $(\stripSurf,\Partition)$ belongs to the class $\mathcal{F}$, whenever it satisfies the following conditions:
\begin{enumerate}[label={\rm(A\arabic*)}]
\item\label{enum:A1} $\stripSurf$ has a countable base;
\item\label{enum:A2} every leaf $\leaf$ of $\Partition$ is a non-compact closed subset of $\stripSurf$, so it is an image of $\bR$ under some topological embedding $\bR\subset\stripSurf$;
\item\label{enum:A3} each boundary component of $\stripSurf$ is a leaf of $\Partition$.
\end{enumerate}
\end{definition}
Our aim is to show (Theorem~\ref{th:ker_ahom0}) that for foliations from class $\mathcal{F}$ satisfying certain additional ``local finiteness'' condition we have that
\[ \ahom(\HZS) = \HZY. \]
This will imply that the image of $\ahom$ is a union of path components of $\HY$ and that $\ahom$ induces a homomorphism of the corresponding homeotopy groups:
\begin{equation}\label{equ:ahom0}
\ahom_0: \pi_0\HS = \frac{\HS}{\HZS} \longrightarrow \frac{\HY}{\HZY}  = \pi_0\HY.
\end{equation}
Moreover, we also show that in this case for connected $\stripSurf$ the kernel of $\ahom_0$ is either trivial or is isomorphic with $\bZ_2$.

\begin{definition}\label{def:leaves_types}
Let $(\stripSurf,\Partition)$ be a foliated surface from class $\mathcal{F}$.
\begin{enumerate}[label={\rm\arabic*)}, wide, itemsep=1ex]
\item\label{enum:def:leaves_types:regular}
Say that a leaf $\leaf\in\Partition$ is \myemph{regular} if there exists an open saturated neighborhood $\Uman$ of $\leaf$ such that the pair $(\overline{\Uman}, \Uman)$ is ``trivially foliated'' in the sense that $(\overline{\Uman}, \Uman)$ is foliated homeomorphic
\begin{itemize}
\item either to the pair $\bigl( \bR\times[-1,1], \bR\times(-1,1) \bigr)$ if $\leaf\subset\Int{\stripSurf}$,
\item or to the pair $\bigl( \bR\times[0,1], \bR\times[0,1) \bigr)$ if $\leaf\subset\partial\stripSurf$,
\end{itemize}
with trivial foliation into lines $\bR\times t$, via a homeomorphism sending $\leaf$ to $\bR\times 0$.

\item\label{enum:def:leaves_types:singluar}
A leaf which is not regular will be called \myemph{singular}%
\footnote{\,There is some abuse of terminology used in~\cite[Property (c3) before Lemma~3.2]{MaksymenkoPolulyakh:PGC:2015}.
Such leaves for striped surfaces were called \myemph{special}.}.

\item\label{enum:def:leaves_types:special}
For a leaf $\leaf$ define its \myemph{Hausdorff closure} by
\[
   \hcl{\leaf}:= \mathop{\cap}\limits_{\text{$\Uman$ is a neighborhood of $\leaf$}} \overline{\sat{\Uman}},
\]
so it is the intersection of closures of all saturated neighborhoods of $\leaf$.
Evidently, $\hcl{\leaf}$ is saturated and $\leaf\subset \hcl{\leaf}$.
We will say that $\leaf$ is \myemph{special} whenever $\hcl{\leaf} \setminus \leaf \not=\varnothing$.
\end{enumerate}

Denote by $\PartitionReg$, $\PartitionSpec$, and $\PartitionSing$ respectively the families of all regular, special and singular leaves of $\Partition$.
Then we have the following relations:
\begin{align*}
	&\PartitionSpec \subset \PartitionSing, &
	&\Partition = \PartitionReg \sqcup \PartitionSing.
\end{align*}
\end{definition}

\begin{remark}\rm
Notice that by definition each one-dimensional foliation $\Partition$ on a surface $\stripSurf$ is ``locally regular'', \ie every point $x\in\stripSurf$ has a neighborhood $\Uman$ such that the leaf of $\Partition|_{\Uman}$ containing $x$ is regular in the sense of Definition~\ref{def:leaves_types}.
Thus Definition~\ref{def:leaves_types} puts restriction to a global structure of the foliation near a leaf.

Let us also mention that it is possible that a leaf might have a trivially foliated saturated neighborhood $\Uman$, however for any such neighborhood the pair $(\overline{\Uman}, \Uman)$ will never be trivially foliated.
For example, let $\Partition=\{\bR\times y\}_{y\in\bR}$ be a trivial foliation on $\bR^2$ into horizontal lines and $\stripSurf = \bR^2\setminus 0$.
Then it is easy to see that $\Partition|_{\stripSurf}$ has two singular leaves $\omega_1 = (-\infty;0)\times 0$ and $\omega_2 = (0;+\infty)\times 0$.
Indeed, any open saturated neighborhood $\Uman$ of either of $\omega_1$ or $\omega_2$ contains $\bR\times(-\eps,\eps) \setminus (0,0)$ for some small $\eps>0$, and therefore $(\overline{\Uman}, \Uman)$ can not be foliated homeomorphic with $\bigl( \bR\times[-1,1], \bR\times(-1,1) \bigr)$.
This implies $\hcl{\leaf_1} = \hcl{\leaf_2} = \leaf_1 \cup \leaf_2$, whence both $\leaf_1$ and $\leaf_2$ are special.
\end{remark}

The main result of the present paper is the following theorem relating the groups $\pi_0\HS$ and $\pi_0\HY$.
\begin{theorem}\label{th:ker_ahom0}
Let $(\stripSurf,\Partition)$ be a foliated surface from class $\mathcal{F}$ satisfying the following additional condition:
\begin{enumerate}[label={\rm(A\arabic*)}, start=4]
\item\label{enum:A4} the family $\PartitionSing$ of all singular leaves of $\Partition$ is locally finite.
\end{enumerate}
Then $\ahom(\HZS)= \HZY$, whence
\begin{enumerate}[label={\rm(\arabic*)}, leftmargin=*]
\item
the image $\ahom(\HZS)$ of the homomorphism $\ahom:\HS \to \HY$, see~\eqref{equ:ahom}, is a union of path components of $\HY$;
\item
$\ahom$ induces a homomorphism $\ahom_0:\pi_0\HS \to \pi_0\HY$.
\end{enumerate}
Moreover, if $\stripSurf$ is \myemph{connected}, then the kernel $\ker\ahom_0$ is either
\begin{enumerate}[label={\rm(\alph*)}]
\item \myemph{trivial}, \ie $\ahom_0$ is injective, or 
\item $\ker\ahom_0 \cong \bZ_2$, which happens if and only if $\HS$ contains a leaf-preserving homeomorphism which reverses orientation of all leaves.
\end{enumerate}
\end{theorem}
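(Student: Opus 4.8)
The plan is to establish the equality $\ahom(\HZS)=\HZY$ by proving the two inclusions separately, and then to read off the statement about $\ker\ahom_0$ from a $\bZ_2$-valued ``leaf orientation'' invariant of leaf-preserving homeomorphisms.

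For the inclusion $\ahom(\HZS)\subseteq\HZY$ I would first check that $\ahom$ is continuous for the compact-open topologies: since $\kdif\circ\pr=\pr\circ\dif$ and $\pr$ is a quotient map, a path $s\mapsto\dif_s$ in $\HS$ produces a path $s\mapsto\kdif_s$ in $\HY$. Applying this to an isotopy joining $\id_{\stripSurf}$ to $\dif\in\HZS$ yields an isotopy joining $\id_{\Yman}$ to $\kdif$, so $\kdif\in\HZY$. The reverse inclusion $\HZY\subseteq\ahom(\HZS)$ is the technical heart: given an isotopy $\kdif_s$ in $\HY$ with $\kdif_0=\id_{\Yman}$, I want to lift it to an isotopy $\dif_s$ in $\HS$ with $\dif_0=\id_{\stripSurf}$; then $\dif:=\dif_1\in\HZS$ satisfies $\ahom(\dif)=\kdif_1$. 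Over the regular leaves the projection $\pr$ is a locally trivial foliated $\bR$-bundle (Definition~\ref{def:leaves_types}\ref{enum:def:leaves_types:regular}), so there the lift can be produced by fibrewise interpolation along local sections; the role of hypothesis~\ref{enum:A4} is that $\PartitionSing$ is locally finite, so $\Yman$ has only locally finitely many non-Hausdorff branch points, and the local lifts over the regular part can be matched across each singular leaf one at a time. Carrying out this patching, continuously in $s$ and compatibly with the foliation, is where I expect the main effort of parts~(1)--(2) to lie.

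Granting $\ahom(\HZS)=\HZY$, parts~(1) and~(2) are formal, and the kernel is analysed as follows. By construction $\ker\ahom$ is exactly the subgroup of \emph{leaf-preserving} homeomorphisms, and a diagram chase shows that $\ker\ahom_0$ is the image of $\ker\ahom$ in $\pi_0\HS$: if $\kdif\in\HZY$ then by part~(1) there is $g\in\HZS$ with $\ahom(g)=\kdif$, whence $\dif g^{-1}\in\ker\ahom$ and $[\dif]=[\dif g^{-1}]$. Each leaf is homeomorphic to $\bR$, so a leaf-preserving $\dif$ restricts on every leaf to a monotone self-homeomorphism; this defines a sign $\sigma(\dif)\in\bZ_2$ which, since $\stripSurf$ is connected and the sign is locally constant in any trivially foliated chart, is the same on all leaves. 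A fibrewise straightening (convex interpolation inside the leaf charts, which is automatically leaf-preserving hence foliated) shows that every orientation-preserving ($\sigma=0$) leaf-preserving homeomorphism lies in $\HZS$. Consequently $\ker\ahom_0$ is generated by the class of a single orientation-reversing leaf-preserving homeomorphism $\tau$ whenever one exists; since $\tau^2$ is orientation-preserving and hence lies in $\HZS$, we obtain $|\ker\ahom_0|\le 2$. Moreover a nontrivial class must come from $\sigma^{-1}(1)$, which already gives the ``only if'' direction of~(b); so the whole statement reduces to the nontriviality $\tau\notin\HZS$.

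That nontriviality is the step I expect to be the main obstacle. When the leaves of $\Partition$ are coherently orientable I would fix such a global orientation and observe that every $\dif\in\HS$ preserves or reverses it by a single sign $\Sigma(\dif)\in\bZ_2$; this $\Sigma$ is a locally constant homomorphism $\HS\to\bZ_2$, hence factors through $\pi_0\HS$ and kills $\HZS$, while $\Sigma(\tau)=1$, giving $\tau\notin\HZS$. In general I would reduce to this case by passing to the orientation double cover $\widehat{\stripSurf}$ associated with $\Partition$, on which the lifted foliation $\widehat{\Partition}$ is coherently oriented and the covering involution reverses orientation; the key check is that the lift of a leaf-reversing $\tau$ interacts with the involution so as to produce, upstairs, an orientation-preserving leaf-preserving homeomorphism whose class is detected by $\Sigma$ and which cannot become trivial downstairs. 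Alternatively, the nontriviality could be extracted from the previously established identification of $\pi_0\HS$ with a group of automorphisms of the combinatorial graph $G$, in which $\tau$ realises a distinguished ``global flip''; establishing that this automorphism is nontrivial is the crux either way, and once it is in hand the dichotomy~(a)/(b) follows immediately.
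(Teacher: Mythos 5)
Your descent argument for the easy inclusion $\ahom(\HZS)\subset\HZY$ is fine (it needs the remark that $\id_{[0,1]}\times\pr$ is again a quotient map, and then the exponential law of Corollary~\ref{cor:CAB_homotopies}; the paper gets the same inclusion instead by comparing its characterizations of $\HZS$ and $\HZY$). The first genuine gap is the reverse inclusion $\HZY\subset\ahom(\HZS)$: you propose to lift a whole isotopy $\kdif_s$ of $\Yman$ by fibrewise interpolation over the regular part and to ``match the local lifts across each singular leaf one at a time'', and you explicitly defer this patching --- but that deferred step is the entire theorem, and it is not clear it can be carried out as described (there may be infinitely many, though locally finitely many, singular leaves, and continuity at the branch points of $\Yman$ is delicate: even the paper's characterization of $\HZY$, Lemma~\ref{lm:spec_pt_manif}, needs Crowell's one-point extension-of-isotopy theorem there). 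The paper avoids lifting isotopies altogether: by Theorems~\ref{th:char_strip_surf} and~\ref{th:reduced_striped_surf} it passes to a reduced striped atlas (treating the open cylinder and M\"obius band separately, since these admit no reduced atlas), uses Lemma~\ref{lm:spec_pt_manif} to replace ``$\kdif\in\HZY$'' by the pointwise conditions that $\kdif$ fixes $\partial\Yman\cup\specPoints$ and preserves each complementary component with its orientation, and then writes down a preimage directly, strip by strip, via $\dif_0(x,y)=(x,\fmu(y))$ with $\fmu$ obtained from $\kdif|_{\dComp}$ through the homeomorphism of Lemma~\ref{lm:char_leaves}; membership of the result in $\HZS$ is certified by the imported characterization Theorem~\ref{th:charact_HidZDelta}.

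The second gap is your claim that every leaf-preserving homeomorphism with $\sigma=0$ lies in $\HZS$ by ``convex interpolation inside the leaf charts''. The straight-line isotopy is chart dependent: on a seam the two strip coordinates are identified by an arbitrary monotone gluing homeomorphism $\phi_{\bdGlueInd}$, and $(1-t)\flambda(x)+tx$ computed on one side agrees with the interpolation computed on the other side only when $\phi_{\bdGlueInd}$ is affine, so the chartwise isotopies do not assemble into a global one. Repairing this (e.g.\ first isotoping $\dif$ to the identity on all seams and boundary leaves, then working rel boundary strip by strip) is precisely the content of Theorem~\ref{th:charact_HidZDelta}, which the paper cites rather than reproves. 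The same theorem also makes your hardest step unnecessary: in the paper's argument the homomorphism $\ori:\ahom^{-1}(\HZY)\to\bZ_2$ has kernel \emph{exactly} $\HZS$, so an all-leaf-reversing leaf-preserving $\tau$ automatically represents a nontrivial class, and no coherent orientation of $\Partition$ or orientation double cover is needed --- which matters, since the canonical foliation of a striped surface need not be orientable and your double-cover detection is only sketched. Your kernel bookkeeping itself ($\ker\ahom_0$ is the image of the leaf-preserving subgroup in $\pi_0\HS$, the sign $\sigma$ is constant over leaves by local constancy plus connectedness, $\tau^2$ has $\sigma=0$, hence $|\ker\ahom_0|\le 2$) is correct and parallels the paper's argument once these two missing inputs are supplied.
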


In fact in a series of papers by the authors it was obtained a characterization of foliated surfaces $(\stripSurf,\Partition)$ from class $\mathcal{F}$ satisfying also the assumption~\ref{enum:A4} of Theorem~\ref{th:ker_ahom0}.
It is shown in \cite[Theorem~4.4]{MaksymenkoPolulyakh:PIGC:2017} that every such surface is glued from certain ``model strips'' foliated by parallel lines.
Such surfaces were called \myemph{striped}, and the proof of Theorem~\ref{th:ker_ahom0} essentially exploits an existence of that gluing.
This description is motivated by results of W.~Kaplan, see Theorem~\ref{th:Kaplan} below, describing the structure of foliations on $\bR^2$.

Furthermore, in the joint paper with Yu.~Soroka~\cite{MaksymenkoPolulyakhSoroka:PICG:2016} the homeotopy group $\pi_0\HS$ of canonical foliation $\Partition$ of a striped surface $\stripSurf$ is identified with the group of automorphisms of a certain graph $G$ with additional structure encoding the gluing of $\stripSurf$ from model strips.
In fact, the vertices of $G$ are strips and the edges are ``seams'' (\ie leaves along which we glue the strips), so this graph is \myemph{in a certain sense dual} to the space of leaves $\Yman$.
It is also proved in S.~Maksymenko and O.~Nikitchenko~\cite{MaksymenkoNikitchenko:2021} that $G$ (as a one-dimensional CW-complex) is homotopy equivalent to the surface $\stripSurf$.

Thus Theorem~\ref{th:ker_ahom0} also gives a dual description of the homeotopy group $\pi_0\HS$ in terms of the space of leaves.

\subsection*{Structure of the paper}
In Section~\ref{sect:nonHausdorff_1-manif} we consider several properties of \myemph{non-Hausdorff} $T_1$-spaces.
In particular, we recall the exponential law for them as well as describe the relations between isotopies and paths in the groups of homeomorphisms for such spaces, see Corollary~\ref{cor:CAB_homotopies}.
We also present a characterization of the identity path component of the group of homeomorphisms of second countable $T_1$-manifolds that are not necessarily Hausdorff, see Lemma~\ref{lm:spec_pt_manif}.
In Section~\ref{sect:striped_surf} we recall the results about striped surfaces, and prove Theorem~\ref{th:ker_ahom0} in Section~\ref{proof:th:ker_ahom0}.

\section{$T_1$-spaces}\label{sect:nonHausdorff_1-manif}
We will recall here several elementary properties of $T_1$-spaces which are not necessarily Hausdorff.
They are well-known and rather trivial for $T_2$-spaces, while for $T_1$-ones they should also be known for specialists, thought we did not find their explicit exposition in the literature.
Therefore to make the paper self-contained and for future references we will collect them together and present their short proofs.
These resutls will be applied to the spaces of leaves of foliations.

Recall that a topological space $\Yman$ is \myemph{compact} if every open cover of $\Yman$ contains a finite subcover.
In this paper a (not necessarily Hausdorff) topological space $\Yman$ will be called \myemph{locally compact}%
\footnote{\,Notice that there are several variations of this notion for non-Hausdorff spaces which coincide for Hausdorff ones.}
whenever for each $y\in \Yman$ and an open neighborhood $\Uman$ of $y$ there exists a compact subset $B\subset\Yman$ such that $y\in \Int{B} \subset \overline{B} \subset \Uman$.

\subsection{Exponential law}
For two sets $\Aman$ and $\Bman$ denote by $\maps{\Aman}{\Bman}$ the set of all maps $f:\Aman\to\Bman$.
Then for any other set $\Cman$ we have a natural bijection
\[
E = \EXP{\Aman,\Bman,\Cman}: \maps{\Aman\times\Bman}{\Cman} \to  \maps{\Aman}{\maps{\Bman}{\Cman}}
\]
called an \myemph{exponential map} and defined as follows: if $\Fmap:\Aman\times\Bman\to\Cman$ is a map, then $E(\Fmap):\Aman \to \maps{\Bman}{\Cman}$ is given by $E(\Fmap)(a)(b) = \Fman(a,b)$ for all $(a,b)\in\Aman\times\Bman$.

If $\Aman,\Bman$ are topological spaces, it is natural to consider the space $\Cont{\Aman}{\Bman}$ of continuous maps $\Aman\to\Bman$ instead of $\maps{\Aman}{\Bman}$.
We will always endow $\Cont{\Aman}{\Bman}$ with the compact open topology.
Then the variant of exponential law for topological spaces holds under additional assumptions of $\Aman$ and $\Bman$:
\begin{subtheorem}\label{th:exp_law}
Let $\Aman$, $\Bman$, $\Cman$ be any topological spaces.
Then $E$ induces a continuous injective map
\begin{equation}\label{equ:exp_law}
E = \EXP{\Aman,\Bman,\Cman}: \Cont{\Aman\times\Bman}{\Cman} \to \Cont{\Aman}{\Cont{\Bman}{\Cman}}.
\end{equation}
If $\Bman$ is locally compact (not necessarily Hausdorff), then~\eqref{equ:exp_law} is a continuous bijection.
Moreover, if, in addition, $\Aman$ is a $T_3$-space, then~\eqref{equ:exp_law} is a homeomorphism.
\end{subtheorem}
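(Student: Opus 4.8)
The plan is to treat the three assertions in increasing order of difficulty. Throughout, for a compact $K\subseteq\Bman$ and an open $U\subseteq\Cman$ write $W(K,U)=\{f\in\Cont{\Bman}{\Cman}: f(K)\subseteq U\}$; these form a subbasis of the compact–open topology on $\Cont{\Bman}{\Cman}$, and the analogous sets $W(Q,\mathcal V)$, with $Q\subseteq\Aman$ compact and $\mathcal V$ open, form a subbasis on $\Cont{\Aman}{\Cont{\Bman}{\Cman}}$.

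First, the well-definedness, injectivity and continuity of $E$. For $\Fmap\in\Cont{\Aman\times\Bman}{\Cman}$ and $a\in\Aman$ the map $E(\Fmap)(a)=\Fmap(a,\cdot)$ is continuous, being the composite of the slice inclusion $\Bman\cong\{a\}\times\Bman\hookrightarrow\Aman\times\Bman$ with $\Fmap$. To see that $E(\Fmap):\Aman\to\Cont{\Bman}{\Cman}$ is continuous I would check preimages of subbasic sets: $E(\Fmap)^{-1}(W(K,U))=\{a:\Fmap(\{a\}\times K)\subseteq U\}$, and if $a_0$ lies in it then $\Fmap^{-1}(U)$ is an open set containing the compact slice $\{a_0\}\times K$, so the tube lemma (which needs no separation axioms) yields an open $V\ni a_0$ with $\Fmap(V\times K)\subseteq U$; hence the preimage is open. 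Injectivity is immediate, since $E(\Fmap)(a)(b)=\Fmap(a,b)$ recovers $\Fmap$. Continuity of $E$ itself then follows from the identity $E^{-1}(W(Q,W(K,U)))=W(Q\times K,U)$, whose right-hand side is subbasic open in $\Cont{\Aman\times\Bman}{\Cman}$ because $Q\times K$ is compact, using the standard fact that the iterated sets $W(Q,W(K,U))$ constitute a subbasis of the compact–open topology on the target.

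Second, surjectivity when $\Bman$ is locally compact. Given continuous $g:\Aman\to\Cont{\Bman}{\Cman}$, I must show its adjoint $\Fmap(a,b)=g(a)(b)$ is continuous, for then $E(\Fmap)=g$. Writing $\Fmap$ as the composite $\Aman\times\Bman\xrightarrow{g\times\id}\Cont{\Bman}{\Cman}\times\Bman\xrightarrow{\ \mathrm{ev}\ }\Cman$ with $\mathrm{ev}(f,b)=f(b)$, and noting $g\times\id$ is continuous, it suffices to prove $\mathrm{ev}$ continuous; this is exactly where local compactness enters. If $f_0(b_0)\in U$, then $f_0^{-1}(U)$ is an open neighbourhood of $b_0$, and local compactness in the paper's sense yields a compact $K$ with $b_0\in\Int{K}\subseteq\overline{K}\subseteq f_0^{-1}(U)$; then $W(K,U)\times\Int{K}$ is a neighbourhood of $(f_0,b_0)$ carried by $\mathrm{ev}$ into $U$.

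Third, that $E^{-1}$ is continuous when moreover $\Aman$ is $T_3$; this is the step I expect to be the main obstacle. I must show that $g\mapsto\Fmap_g$ pulls back each subbasic $W(L,U)$, with $L\subseteq\Aman\times\Bman$ compact, to an open set. The difficulty is that $L$ need not be a product, so one cannot directly read off a rectangular neighbourhood of $g_0$. The remedy is to cover $L$ finitely by boxes: for each $(a,b)\in L$ one has $g_0(a)(b)\in U$, and using regularity of $\Aman$ to shrink the $\Aman$-factor to a neighbourhood $\alpha$ with controlled closure, together with local compactness of $\Bman$ to replace the $\Bman$-factor by a compact $K$ with $\Int{K}\neq\varnothing$, one obtains boxes $\overline{\alpha_t}\times K_t$ on which $g_0$ keeps its value in $U$ while the interiors $\alpha_t\times\Int{K_t}$ cover $L$. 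Passing to a finite subcover and assembling the compact $\Aman$-pieces $P_t$ (continuous images of closed, hence compact, portions of $L$) produces a neighbourhood $\bigcap_t W(P_t,W(K_t,U))$ of $g_0$ inside the preimage. Both hypotheses, regularity of $\Aman$ and local compactness of $\Bman$, are consumed in this chopping, which is precisely why the homeomorphism assertion requires the extra $T_3$ condition; combining the four steps yields that $E$ is a homeomorphism.
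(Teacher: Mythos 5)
The paper offers no proof of this theorem: it cites Engelking for the Hausdorff case and \cite[Lemma~3.3]{KhokhliukMaksymenko:PIGC:2020} for the non-Hausdorff one, so your argument has to stand on its own. Most of it does. The tube-lemma proof that each $E(\Fmap)$ is continuous, injectivity, the identity $E^{-1}\bigl(W(Q,W(K,U))\bigr)=W(Q\times K,U)$, and the second step (continuity of evaluation from the paper's notion of local compactness, hence surjectivity) are all correct and need no separation axioms. Your third step is also essentially right, up to one repair: since $\Bman$ need not be Hausdorff, the compact set $K_t$ need not be closed in $\Bman$, so $L\cap(\overline{\alpha_t}\times K_t)$ need not be closed in $L$ and the compactness of your pieces $P_t$, described as images of ``closed portions of $L$'', is unjustified. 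Take instead $P_t:=\mathrm{pr}_{\Aman}(L)\cap\overline{\alpha_t}$, a closed subset of the compact set $\mathrm{pr}_{\Aman}(L)$; it still lies in $\overline{\alpha_t}\subseteq g_0^{-1}(W(K_t,U))$ and still contains the $\Aman$-coordinate of every point of $L\cap(\alpha_t\times\Int{K_t})$, so the neighbourhood $\bigcap_t W\bigl(P_t,W(K_t,U)\bigr)$ works as you intended.

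The genuine gap is in your proof that $E$ itself is continuous, where you invoke ``the standard fact'' that the iterated sets $W(Q,W(K,U))$ form a subbasis of the compact--open topology on $\Cont{\Aman}{\Cont{\Bman}{\Cman}}$. That fact is standard only when compact subsets of $\Aman$ sit inside a regular space: the known proofs (Jackson's lemma; Engelking, where ``compact'' includes Hausdorff) must shrink the open cover $\{\hat{F}^{-1}(\mathcal{B}_a)\}_{a\in Q}$ of a compact $Q\subseteq\Aman$ to a cover of $Q$ by \emph{compact} sets subordinate to it, and that shrinking is exactly what regularity of $\Aman$ provides --- it is the same mechanism you later use, legitimately, under the $T_3$ hypothesis. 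For arbitrary $\Aman$ it can fail: in the one-point compactification $\bQ^{*}=\bQ\cup\{\infty\}$ of the rationals, the open cover $\{\bQ,\;\bQ^{*}\setminus\{0\}\}$ admits no compact shrinking, because a compact $C_1\subseteq\bQ$ is nowhere dense, hence misses rationals $y_n\to 0$, while a compact $C_2\subseteq\bQ^{*}\setminus\{0\}$ cannot contain all the $y_n$ (cover it by $\{\infty\}\cup\bigl(\bQ\setminus(\{0\}\cup\{y_n\}_n)\bigr)$ together with small rational intervals around the separate $y_n$; no finite subfamily suffices). This is not merely a missing citation: taking $\Aman=\bQ^{*}$, $\Bman=\bN$ discrete, $\Cman$ the Sierpi\'{n}ski space, $Q=\bQ^{*}$, $\mathcal{V}=\bigcup_n W(\{n\},\{1\})$, and $F$ the map whose slices are the characteristic functions of $\bQ$ and $\bQ^{*}\setminus\{0\}$ (and $\varnothing$ otherwise), one checks, by deleting a point not covered by the compact shrinking from these two open sets, that every basic compact--open neighbourhood of $F$ contains maps outside $E^{-1}(W(Q,\mathcal{V}))$, so that preimage is not open. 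Hence your reduction to iterated subbasic sets cannot be carried out for completely arbitrary $\Aman$; it becomes valid once $\Aman$ is $T_3$ (or Hausdorff), and the unrestricted continuity assertion of the first clause should anyway be read with the precise hypotheses and argument of the cited Lemma~3.3 rather than justified by the ``standard fact'' as you state it.
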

This theorem is well known and usually formulated for Hausdorff spaces~\cite{Engelking:GenTop}.
For non-Hausdorff spaces its proof is presented in~\cite[Lemma~3.3]{KhokhliukMaksymenko:PIGC:2020}.

\begin{subcorollary}\label{cor:CAB_homotopies}
If $\Bman$ is locally compact, and $\Cman$ is an arbitrary topological space, then the exponential map
\begin{equation}\label{equ:exp_law_paths}
E = \EXP{[0,1],\Bman,\Cman}: \Cont{[0,1]\times\Bman}{\Cman} \to \Cont{[0,1]}{\Cont{\Bman}{\Cman}}
\end{equation}
is a homeomorphism.
In particular, it induces a bijection
\[
E_0: \pi_0\Cont{[0,1]\times\Bman}{\Cman} \to \pi_0\Cont{[0,1]}{\Cont{\Bman}{\Cman}}.
\]
between the corresponding sets of path components.
\end{subcorollary}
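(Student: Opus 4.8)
The plan is to obtain this as a direct specialization of Theorem~\ref{th:exp_law} to the case $\Aman=[0,1]$, followed by an appeal to the functoriality of $\pi_0$. The only genuine content of the corollary lives in that theorem; everything else is a matter of checking hypotheses and recording a formal consequence.

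First I would verify the two additional hypotheses of Theorem~\ref{th:exp_law}. The space $\Bman$ is assumed locally compact, so that hypothesis is granted. The interval $[0,1]$, carrying its standard topology, is a compact metric space and hence normal; in particular it is a $T_3$-space. Thus, taking $\Aman=[0,1]$, both extra assumptions of Theorem~\ref{th:exp_law}—local compactness of $\Bman$ and the $T_3$ property of $\Aman$—are satisfied. The theorem therefore asserts that
\[
E = \EXP{[0,1],\Bman,\Cman}: \Cont{[0,1]\times\Bman}{\Cman} \to \Cont{[0,1]}{\Cont{\Bman}{\Cman}}
\]
is a homeomorphism, which is the first claim.

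For the induced map on path components I would invoke the standard fact that $\pi_0$ is a functor from topological spaces to sets. Concretely, any homeomorphism $\phi\colon X\to Y$ carries each path $\gamma\colon[0,1]\to X$ to a path $\phi\circ\gamma$ in $Y$, and because $\phi^{-1}$ is continuous it does the same in the reverse direction; hence $\phi$ sends path components of $X$ bijectively onto path components of $Y$. Applying this to the homeomorphism $E$ yields the desired bijection
\[
E_0: \pi_0\Cont{[0,1]\times\Bman}{\Cman} \to \pi_0\Cont{[0,1]}{\Cont{\Bman}{\Cman}}.
\]

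I do not expect any real obstacle here, as the corollary is essentially a restatement of Theorem~\ref{th:exp_law} with a particular choice of $\Aman$. The single point deserving a moment's care is confirming that $[0,1]$ meets the separation hypothesis $T_3$ required by the theorem, and this is immediate from its metrizability; the passage to $\pi_0$ is then purely formal.
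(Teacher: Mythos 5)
Your proof is correct and follows exactly the route the paper intends: the corollary is stated without separate proof precisely because it is the specialization of Theorem~\ref{th:exp_law} to $\Aman=[0,1]$, whose $T_3$ property you rightly note is the only hypothesis needing verification, with the bijection on $\pi_0$ then formal. Nothing is missing.
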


Moreover, for groups of homeomorphisms one can say more.
Let
\begin{itemize}[leftmargin=*]
\item $\Homeo(\Bman)$ be the group of homeomorphisms of a topological space $\Bman$;
\item $\HomeoId(\Bman)$ be the path component of $\id_{\Bman}$ in $\Homeo(\Bman)$, so it consists of $\dif\in\Homeo(\Bman)$ such that there exists a continuous path $\gamma:[0,1]\to\Homeo(\Bman)$ such that $\gamma(0)=\id_{\Bman}$ and $\gamma(1)=\dif$;
\item $\tHomeoId{\Bman}$ be the subgroup of $\Homeo(\Bman)$ consisting of homeomorphisms isotopic to $\id_{\Bman}$.
\end{itemize}
\begin{subcorollary}\label{cor:pi0_homeo}
For any topological space $\Bman$ we have that
\begin{enumerate}[label={\rm(\arabic*)}, leftmargin=*]
\item\label{enum:cor:pi0_homeo:Hid}
$\tHomeoId{\Bman} \subset \HomeoId(\Bman)$ and these subgroups are normal in $\Homeo(\Bman)$;
\item\label{enum:cor:pi0_homeo:loc_comp}
if $\Bman$ is locally compact, $\tHomeoId{\Bman} = \HomeoId(\Bman)$;

\item\label{enum:cor:pi0_homeo:pi0}
the set $\pi_0\Homeo(\Bman)$ of path components of $\Homeo(\Bman)$ has a group structure under which the natural map $p:\Homeo(\Bman) \to \pi_0\Homeo(\Bman)$ is an epimorphism with kernel $\HomeoId(\Bman)$, whence $p$ induces an isomorphism of groups $\Homeo(\Bman)/\HomeoId(\Bman) \cong \pi_0\Homeo(\Bman)$;
\end{enumerate}
\end{subcorollary}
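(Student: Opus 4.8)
The plan is to reduce everything to a single elementary continuity fact that survives the passage to a non-Hausdorff, non-locally-compact $\Bman$, where $\Homeo(\Bman)$ with the compact open topology need not be a topological group (neither inversion nor two-variable composition need be continuous). Namely, for \emph{any} fixed continuous $g\colon\Bman\to\Bman$, both \emph{post-composition} $\phi\mapsto g\circ\phi$ and \emph{pre-composition} $\phi\mapsto\phi\circ g$ are continuous self-maps of $\Cont{\Bman}{\Bman}$. This is checked on the subbasic sets $M(K,U)=\{\phi:\phi(K)\subset U\}$ with $K$ compact and $U$ open: the preimage of $M(K,U)$ under post-composition is $M(K,g^{-1}(U))$, and under pre-composition it is $M(g(K),U)$, both again subbasic open since $g^{-1}(U)$ is open and $g(K)$ is compact. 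These two one-sided translations will systematically replace the missing topological-group structure.

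For item~\ref{enum:cor:pi0_homeo:Hid}, the inclusion $\tHomeoId{\Bman}\subset\HomeoId(\Bman)$ is immediate from the exponential law: an isotopy from $\id_{\Bman}$ to $\dif$ is a point $\Hmap\in\Cont{[0,1]\times\Bman}{\Bman}$ whose $t$-slices are homeomorphisms, and the always-continuous map $E$ of Theorem~\ref{th:exp_law} sends it to a continuous path $E(\Hmap)\colon[0,1]\to\Cont{\Bman}{\Bman}$ that lies in $\Homeo(\Bman)$ and joins $\id_{\Bman}$ to $\dif$. To see that the path component $\HomeoId(\Bman)$ is itself a subgroup, I take paths $\gamma$ from $\id_{\Bman}$ to $\gfunc$ and $\delta$ from $\id_{\Bman}$ to $\dif$ and concatenate $\gamma$ with $t\mapsto\gfunc\circ\delta(t)$ (continuous by post-composition) to reach $\gfunc\dif$, while the reverse of $t\mapsto\gfunc^{-1}\circ\gamma(t)$ reaches $\gfunc^{-1}$; normality follows by conjugating, $t\mapsto\func\circ\gamma(t)\circ\func^{-1}$ being continuous as one post- followed by one pre-composition. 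Running the same constructions with jointly continuous isotopies in place of paths (the product being the slicewise composite $(x,t)\mapsto\Hmap(\Gmap(x,t),t)$, and the inverse obtained from $\dif^{-1}\circ\Hmap$ after reversing time) shows that $\tHomeoId{\Bman}$ is likewise a normal subgroup.

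Item~\ref{enum:cor:pi0_homeo:loc_comp} is where local compactness enters. There Corollary~\ref{cor:CAB_homotopies} upgrades $E$ to a bijection, so every path $\gamma$ in $\Homeo(\Bman)$ equals $E(\Hmap)$ for a unique jointly continuous $\Hmap=E^{-1}(\gamma)$; since each slice $\Hmap_t=\gamma(t)$ is a homeomorphism, $\Hmap$ is an isotopy and $\gamma(1)\in\tHomeoId{\Bman}$. Combined with item~\ref{enum:cor:pi0_homeo:Hid} this yields the equality $\tHomeoId{\Bman}=\HomeoId(\Bman)$.

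Finally, for item~\ref{enum:cor:pi0_homeo:pi0} the plan is to identify path components with cosets of $\HomeoId(\Bman)$. Pre-composition continuity shows that a path $\alpha$ from $\gfunc$ to $\func$ yields the path $t\mapsto\alpha(t)\circ\gfunc^{-1}$ from $\id_{\Bman}$ to $\func\gfunc^{-1}$, so $\func\gfunc^{-1}\in\HomeoId(\Bman)$; conversely a path $\eta$ from $\id_{\Bman}$ to $\func\gfunc^{-1}$ gives $t\mapsto\eta(t)\circ\gfunc$ from $\gfunc$ to $\func$. Hence two homeomorphisms lie in one path component exactly when they lie in a common coset of the normal subgroup $\HomeoId(\Bman)$, so $\pi_0\Homeo(\Bman)$ is canonically the coset set $\Homeo(\Bman)/\HomeoId(\Bman)$; transporting the quotient-group structure makes the natural projection $p$ the quotient homomorphism, an epimorphism with kernel $\HomeoId(\Bman)$. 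I expect the only genuine obstacle to lie in the first paragraph: without a topological-group structure one must resist the reflex of inverting paths or multiplying them pointwise, and instead route every closure property through the one-sided translations, whose continuity is unconditional.
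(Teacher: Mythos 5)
Your proposal is correct and follows essentially the same route as the paper's proof: the inclusion $\tHomeoId{\Bman}\subset\HomeoId(\Bman)$ comes from the continuity of $E$ in Theorem~\ref{th:exp_law}, and the reverse inclusion under local compactness comes from the bijectivity of $E$ in Corollary~\ref{cor:CAB_homotopies}, exactly as in the paper. The steps the paper dismisses as ``easy'' or ``straightforward'' (normality and item~\ref{enum:cor:pi0_homeo:pi0}) you fill in correctly, with the right precautions for the non-topological-group setting: you route every closure property through the unconditionally continuous one-sided translations on $\Cont{\Bman}{\Bman}$, and you invert an isotopy as $\dif^{-1}\circ\Hmap$ with time reversed rather than by pointwise slice inversion, which need not be jointly continuous.
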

\begin{proof}
\ref{enum:cor:pi0_homeo:Hid}
By Theorem~\ref{th:exp_law} every isotopy $\Fman:[0,1]\times\Bman\to\Bman$ induces a continuous path $E(\Fman):[0,1]\to\Homeo(\Bman)$ between $\Fman_0$ and $\Fman_1$.
This implies that $\tHomeoId{\Bman} \subset \HomeoId(\Bman)$.
Normality of these groups is easy.

\ref{enum:cor:pi0_homeo:loc_comp}
If $\Bman$ is locally compact, then by Corollary~\ref{cor:CAB_homotopies} the exponential map $\EXP{[0,1],\Bman,\Bman}$ is a homeomorphism, so every continuous path $\gamma:[0,1]\to\Homeo(\Bman)$ is induced by some isotopy $E^{-1}(\gamma):[0,1]\times\Bman\to\Bman$.
This gives the inverse inclusion $\tHomeoId{\Bman} \supset \HomeoId(\Bman)$.

Statement~\ref{enum:cor:pi0_homeo:pi0} is straightforward.
\end{proof}

Thus for locally compact spaces isotopies can equally be thought of as paths in the group of homeomorphisms endowed with compact open topologies.

\subsection{Special points of $T_1$ spaces}
Let $\Yman$ be a topological space.

\begin{subdefinition}\label{def:hcl}
For a point $z\in\Yman$ define its \myemph{Hausdorff closure}, $\hcl{z}$, to be the intersection of closures of all neighborhoods of $z$, that is
\begin{equation}\label{equ:hcl}
	\hcl{y} = \bigcap_{\text{$\Vman$ is a neighborhood of $y$}} \overline{\Vman}.
\end{equation}
Evidently, $z\in\hcl{z}$.
We say that $z$ is \myemph{\branch}%
\footnote{In \cite{HaefligerReeb:EM:1957, GodbillonReeb:EM:1966} such points were called \myemph{branch}.}
whenever $\hcl{z} \setminus z \not=\varnothing$.
Denote by $\specPoints$ the set of all \branch\ points of $\Yman$.
\end{subdefinition}

One easily check that $\Yman$ is Hausdorff iff $\hcl{z} = \{z\}$ for all $z\in\Yman$, that is $\specPoints=\varnothing$.

Further, let $\Lman\subset\Yman$ be a subset, $y\in \Lman$, and $\hclA{y}{\Lman}$ be Hausdorff closure of $z$ in $\Lman$ with respect to the induced topology from $\Yman$.
It is straightforward that
\begin{equation}\label{equ:hclA}
\hclA{y}{\Lman} \ \subset \ \hcl{y} \cap \Lman,
\end{equation}
however the opposite inclusion can fail.

\begin{sublemma}\label{lm:specpt}
For a subset $\Aman\subset\Yman$ the following statements hold true.
\begin{enumerate}[label={\rm(\arabic*)}, itemsep=0.5ex, leftmargin=*]
\item\label{enum:lm:specpt:Y_is_h}
If $\Aman \cap \hcl{y} = \{y\}$ for all $y\in \Aman$, then $\Aman$ is Hausdorff.

\item\label{enum:lm:specpt:A_is_h}
In particular, for every $z\in \specPoints$ the following subspace
\[
\Aman_z := (\Aman\setminus\specPoints)\cup \{z\} =
\Aman \setminus \bigl( \specPoints \setminus \{z\} \bigr)
\]
of $\Yman$ is Hausdorff.

\item\label{enum:lm:specpt:comp}
If $\Aman$ is compact, then $\overline{\Aman} \subset  \Aman \cup \specPoints$.
\end{enumerate}
\end{sublemma}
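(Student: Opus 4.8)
The plan is to reduce all three statements to a single elementary separation criterion for $T_1$-spaces: for two points $y,y'\in\Yman$ one has $y'\in\hcl{y}$ if and only if $y$ and $y'$ cannot be separated by disjoint open neighborhoods, and this relation is symmetric in $y$ and $y'$. I would prove it straight from the definition~\eqref{equ:hcl}. If $y'\in\hcl{y}$, then $y'\in\overline{V}$ for every neighborhood $V$ of $y$, so every neighborhood $W$ of $y'$ meets every such $V$, whence $y,y'$ are non-separable; conversely, if they are non-separable, then every neighborhood $W$ of $y'$ meets every neighborhood $V$ of $y$, so $y'\in\overline{V}$ for all $V$, i.e. $y'\in\hcl{y}$. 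Symmetry is immediate because non-separability is a symmetric relation. This criterion is exactly what replaces the $T_2$ separation axiom in the arguments below.

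Given the criterion, statement~\ref{enum:lm:specpt:Y_is_h} is almost formal. For distinct $y,y'\in\Aman$ the hypothesis $\Aman\cap\hcl{y}=\{y\}$ forces $y'\notin\hcl{y}$, so the criterion supplies disjoint open neighborhoods $V,W$ of $y,y'$ in $\Yman$; intersecting with $\Aman$ yields disjoint neighborhoods in $\Aman$, so $\Aman$ is Hausdorff. I would then obtain~\ref{enum:lm:specpt:A_is_h} by applying~\ref{enum:lm:specpt:Y_is_h} to $\Aman_z$, checking that $\Aman_z\cap\hcl{y}=\{y\}$ for each $y\in\Aman_z$. If $y\in\Aman\setminus\specPoints$, then $y$ is not special, so $\hcl{y}=\{y\}$ and the intersection is trivially $\{y\}$. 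If $y=z$, I argue by contradiction: any $w\in\Aman_z\cap\hcl{z}$ with $w\neq z$ would lie in $\Aman\setminus\specPoints$, hence $\hcl{w}=\{w\}$, so $z\notin\hcl{w}$, contradicting the symmetric form $w\in\hcl{z}\Leftrightarrow z\in\hcl{w}$ of the criterion.

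For statement~\ref{enum:lm:specpt:comp} I would run the classical ``a compact subset of a Hausdorff space is closed'' argument, localized at non-special points. Let $x\in\overline{\Aman}$ with $x\notin\specPoints$, so $\hcl{x}=\{x\}$, and suppose $x\notin\Aman$. Then every $a\in\Aman$ is distinct from $x$, hence $a\notin\hcl{x}$, and the criterion provides disjoint open sets $U_a\ni a$ and $V_a\ni x$. The family $\{U_a\}_{a\in\Aman}$ covers the compact set $\Aman$, so finitely many $U_{a_1},\dots,U_{a_n}$ suffice; then $V:=\bigcap_{i=1}^{n}V_{a_i}$ is an open neighborhood of $x$ disjoint from $\bigcup_{i=1}^{n}U_{a_i}\supset\Aman$, contradicting $x\in\overline{\Aman}$. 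Hence $x\in\Aman$, which gives $\overline{\Aman}\subset\Aman\cup\specPoints$.

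None of the steps is a serious obstacle; the only point requiring care is that in the non-Hausdorff setting ``closure'' and ``Hausdorff closure'' genuinely differ, so I must consistently invoke the separation criterion rather than any $T_2$ intuition. The one conceptual step worth isolating as a preliminary is precisely that criterion, as it underlies all three parts uniformly.
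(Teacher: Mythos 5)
Your proposal is correct and follows essentially the same route as the paper: part (1) via the characterization of Hausdorffness by trivial Hausdorff closures, part (2) by verifying that hypothesis for $\Aman_z$ using the symmetry $w\in\hcl{z}\Leftrightarrow z\in\hcl{w}$, and part (3) by the classical compactness argument for excluding points outside $\Aman\cup\specPoints$ from $\overline{\Aman}$. The only cosmetic differences are that you explicitly prove the separation criterion (and its symmetry), which the paper invokes without proof, and that in (3) you cover the compact set $\Aman$ by neighborhoods $U_a$ separated from $x$, whereas the paper covers it by the complements $\Yman\setminus\overline{V}$ over neighborhoods $V$ of $x$ --- two dual phrasings of the same argument.
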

\begin{proof}
Statement~\ref{enum:lm:specpt:Y_is_h} is an immediate consequence of~\eqref{equ:hclA}.

\ref{enum:lm:specpt:A_is_h}
To prove that $B := (\Lman\setminus\specPoints)\cup \{z\}$ is Hausdorff it suffices to verify that $\hclA{y}{B}=\{y\}$ for all $y\in B$.

If $y\in \Lman \setminus \specPoints$, then $\hcl{y} = \{y\}$, whence $B \cap \hcl{y} = \{y\}$.
Since $z \in \hcl{w}$ if and only if $w \in \hcl{z}$, it follows that $\hcl{z} \subset \specPoints$.
Therefore
\begin{align*}
	B \cap \hcl{z} &=
	\bigl((\Lman\setminus\specPoints)\cup \{z\} \bigr) \cap \hcl{z} =\\
	&=
	\bigl( (\Lman\setminus\specPoints) \cap \hcl{z} \bigr)
	\cup
	\bigl(  \{z\} \cap \hcl{z}\bigr)=
	\varnothing \cup \{z\} = \{z\}.
\end{align*}

\ref{enum:lm:specpt:comp}
Let $y \notin \Lman   \cup \specPoints$.
We will show that then there exists an open neighborhood $W$ of $y$ such that $\Lman \cap W = \varnothing$.
This will imply that $y\not\in\Cl{\Lman}$, whence $\Cl{\Lman} \subset \Lman \cup \specPoints$.

Since $y$ is not a \branch\ point, \ie $\hcl{y} = \{y\}$, we get from~\eqref{equ:hcl} that
\[
\Lman \ \subset \ \Lman \cup \specPoints \ \subset \ \Yman \setminus \{y\} \ = \ \bigcup_{V \ni y} (\Yman \setminus \Cl{V}),
\]
where $V$ runs over all open neighborhoods of $y$.
Thus
\[
\{
\Yman \setminus \overline{V} \mid V \ \text{is a neighborhood of $y$}
\}
\]
is an open cover of a compact set $\Lman$, and so it contains a finite subcover, \ie one can find open neighborhoods $V_1,\ldots,V_m$ of $y$ such that $\Lman \subset \bigcup_{i = 1}^m (\Yman \setminus \Cl{V_i})$.
Hence $W = V_1 \cap \ldots \cap V_m$ is an open neighborhood of $y$ with $\Lman \cap W = \emptyset$.
\end{proof}

Evidently, statement~\ref{enum:lm:specpt:comp} of Lemma~\ref{lm:specpt} extends the well known fact that a compact subset $\Aman$ of a Hausdorff space $\Yman$ is closed.

\begin{subcorollary}\label{cor.compact_is_closed}
	Let $\Aman$ be a compact subset of the subspace $\Yman \setminus \specPoints$.
	Then it is closed in $\Yman$.
\end{subcorollary}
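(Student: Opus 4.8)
The plan is to reduce the statement to a question about \branch\ points and then settle that case by a compactness argument dual to the one already used in Lemma~\ref{lm:specpt}\ref{enum:lm:specpt:comp}. First I would note that compactness of $\Aman$ depends only on its subspace topology, which is the same whether $\Aman$ is viewed inside $\Yman\setminus\specPoints$ or inside $\Yman$; hence $\Aman$ is a compact subset of $\Yman$ and Lemma~\ref{lm:specpt}\ref{enum:lm:specpt:comp} yields $\overline{\Aman}\subset\Aman\cup\specPoints$. Since $\Aman\subset\Yman\setminus\specPoints$ gives $\Aman\cap\specPoints=\varnothing$, we get $\overline{\Aman}\setminus\Aman\subset\specPoints$, so it remains only to show that no \branch\ point lies in $\overline{\Aman}$.

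Here lies the main obstacle: the argument proving Lemma~\ref{lm:specpt}\ref{enum:lm:specpt:comp} separates a point $y$ from $\Aman$ using $\hcl{y}=\{y\}$, and this is exactly what fails when $y$ is \branch. To get around it I would exploit non-specialness of the points of $\Aman$ instead of that of $y$, together with the symmetry $y\in\hcl{a}\Leftrightarrow a\in\hcl{y}$ recorded in the proof of Lemma~\ref{lm:specpt}\ref{enum:lm:specpt:A_is_h}. Concretely, fix $z\in\specPoints$ (so automatically $z\notin\Aman$) and take any $a\in\Aman$. As $a\notin\specPoints$ is not \branch, we have $\hcl{a}=\{a\}$, and since $z\neq a$ this gives $z\notin\hcl{a}$; by symmetry $a\notin\hcl{z}$. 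Then by the definition~\eqref{equ:hcl} of $\hcl{z}$ there is an open neighborhood $V_a$ of $z$ with $a\notin\overline{V_a}$, \ie $a\in\Yman\setminus\overline{V_a}$.

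Letting $a$ range over $\Aman$ produces an open cover $\{\Yman\setminus\overline{V_a}\}_{a\in\Aman}$ of the compact set $\Aman$; I would extract a finite subcover indexed by $a_1,\dots,a_m$ and set $V=V_{a_1}\cap\cdots\cap V_{a_m}$, an open neighborhood of $z$. Since $V\subset V_{a_i}$ implies $\overline{V}\subset\overline{V_{a_i}}$ for each $i$, we get $\overline{V}\subset\bigcap_{i=1}^m\overline{V_{a_i}}$, while $\Aman\subset\bigcup_{i=1}^m(\Yman\setminus\overline{V_{a_i}})$; hence $\Aman\cap\overline{V}=\varnothing$ and in particular $V$ is a neighborhood of $z$ missing $\Aman$. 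Thus $z\notin\overline{\Aman}$, and combined with the first paragraph this forces $\overline{\Aman}=\Aman$, so $\Aman$ is closed in $\Yman$. I expect no further difficulties once the symmetry of the Hausdorff closure is in hand, and I note that the same computation in fact works verbatim for any $y\notin\Aman$, giving an alternative self-contained proof that bypasses Lemma~\ref{lm:specpt}\ref{enum:lm:specpt:comp} altogether.
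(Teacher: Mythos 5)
Your proof is correct, but it takes a genuinely different route from the paper's. The paper argues by contradiction in two lines: for a point $x\in\overline{\Aman}\setminus\Aman\subset\specPoints$ (the inclusion being Lemma~\ref{lm:specpt}\ref{enum:lm:specpt:comp}) it invokes Lemma~\ref{lm:specpt}\ref{enum:lm:specpt:A_is_h} to obtain the \emph{Hausdorff} subspace $\Aman_x=(\Yman\setminus\specPoints)\cup\{x\}$, observes that the compact set $\Aman$ is closed in that Hausdorff space, so $\Aman=\overline{\Aman}\cap\Aman_x$, while $x\in\overline{\Aman}\cap\Aman_x$ by construction --- a contradiction. You instead separate a \branch\ point $z$ from $\Aman$ by hand: from $\hcl{a}=\{a\}$ for each $a\in\Aman$ and the symmetry $z\in\hcl{a}\Leftrightarrow a\in\hcl{z}$ (exactly the relation recorded inside the paper's proof of Lemma~\ref{lm:specpt}\ref{enum:lm:specpt:A_is_h}; it holds because either statement says every neighborhood of $z$ meets every neighborhood of $a$) you produce neighborhoods $V_a$ of $z$ with $a\notin\overline{V_a}$, and a finite subcover of the compact $\Aman$ gives a single neighborhood $V$ of $z$ with $\overline{V}\cap\Aman=\varnothing$. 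This is sound; the only cosmetic point is that the definition~\eqref{equ:hcl} a priori yields a neighborhood $V_a$ that need not be open, but passing to its interior does not enlarge the closure, so your $V$ stays open. What your route buys: as you correctly note at the end, the computation never uses that the point being separated is \branch, only that the points of $\Aman$ are not, so it directly shows that every $y\notin\Aman$ has a neighborhood whose closure misses $\Aman$ --- a self-contained and slightly stronger statement, valid in an arbitrary topological space, which bypasses both parts~\ref{enum:lm:specpt:A_is_h} and~\ref{enum:lm:specpt:comp} of the lemma (your first paragraph's appeal to~\ref{enum:lm:specpt:comp} is then redundant). What the paper's route buys: with Lemma~\ref{lm:specpt} already in place, reducing to the classical fact that compact subsets of Hausdorff spaces are closed makes the corollary essentially immediate, at the cost of being less elementary and tied to the auxiliary subspace $\Aman_x$.
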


\begin{proof}
Let $x \in \overline{\Aman} \setminus \Aman \subset \specPoints$.
Consider the Hausdorff subspace $\Aman_x$ of $Y$.
Then $\Aman$ is closed in $\Aman_x$ as a compact subset of the Hausdorff space $\Aman_x$, whence $\Aman = \overline{\Aman} \cap \Aman_x$.
On the other hand $x \in \overline{\Aman} \cap \Aman_x$ by definition which is impossible.
\end{proof}

\subsection{Locally finite subsets}\label{sect:loc_fin_subsets}
Say that a subset $\Aman \subset \Yman$ is \myemph{locally finite} if for every point $z\in\Yman$ there exists an open neighborhood $\Uman$ such that the intersection $\Uman\cap \Aman$ is finite.
In other words, the family $\bigl\{ \{a\} \mid a\in \Aman \bigr\}$ of one-point subsets of $\Aman$ is locally finite in $\Yman$.

\begin{sublemma}\label{lm:T1_locfin}
Consider the following conditions on a subset $\Aman$ of a topological space $\Yman$:
\begin{enumerate}[label={\rm(\arabic*)}, itemsep=0.5ex]
\item\label{enum:lm:T1_locfin:closed_discr}
$\Aman$ is closed and discrete;

\item\label{enum:lm:T1_locfin:U}
for each $y\in\Yman$ there exists a neighborhood $\Vman$ intersecting $\Aman$ in at most one point;

\item\label{enum:lm:T1_locfin:SP_discrete}
$\Aman$ is locally finite;
\end{enumerate}
Then we have the following implications:
\ref{enum:lm:T1_locfin:closed_discr}%
$\Rightarrow$%
\ref{enum:lm:T1_locfin:U}%
$\Rightarrow$%
\ref{enum:lm:T1_locfin:SP_discrete}.

If $\Yman$ is $T_1$ then we also have that~\ref{enum:lm:T1_locfin:SP_discrete}$\Rightarrow$\ref{enum:lm:T1_locfin:closed_discr}, \ie all the above conditions are equivalent.
\end{sublemma}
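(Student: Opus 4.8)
The plan is to establish the chain
\ref{enum:lm:T1_locfin:closed_discr}~$\Rightarrow$~\ref{enum:lm:T1_locfin:U}~$\Rightarrow$~\ref{enum:lm:T1_locfin:SP_discrete} for an arbitrary space $\Yman$, and then to close the loop with \ref{enum:lm:T1_locfin:SP_discrete}~$\Rightarrow$~\ref{enum:lm:T1_locfin:closed_discr} using the extra $T_1$ hypothesis. For \ref{enum:lm:T1_locfin:closed_discr}~$\Rightarrow$~\ref{enum:lm:T1_locfin:U} I would fix $y\in\Yman$ and split into two cases. If $y\notin\Aman$, then since $\Aman$ is closed, $\Vman:=\Yman\setminus\Aman$ is an open neighborhood of $y$ meeting $\Aman$ in no point at all. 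If $y\in\Aman$, then discreteness of $\Aman$ provides an open $\Vman\subset\Yman$ with $\Vman\cap\Aman=\{y\}$. In either case $\Vman$ meets $\Aman$ in at most one point. The implication \ref{enum:lm:T1_locfin:U}~$\Rightarrow$~\ref{enum:lm:T1_locfin:SP_discrete} is then immediate, since a set of cardinality at most one is finite.

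For the reverse implication, assuming $\Yman$ is $T_1$, the single decisive observation is that in a $T_1$-space every finite subset is closed, being a finite union of closed singletons. Given that $\Aman$ is locally finite, for any $y\in\Yman$ I would pick an open neighborhood $\Uman$ with $\Uman\cap\Aman$ finite. The set $F:=(\Uman\cap\Aman)\setminus\{y\}$ is finite, hence closed, so $\Vman:=\Uman\setminus F$ is an open neighborhood of $y$ with $\Vman\cap\Aman\subset\{y\}$. This single construction delivers both halves of \ref{enum:lm:T1_locfin:closed_discr} at once. If $y\in\Aman$, it shows $\{y\}=\Vman\cap\Aman$ is open in the subspace topology of $\Aman$, giving discreteness. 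If $y\notin\Aman$, then $F=\Uman\cap\Aman$ and $\Vman$ is disjoint from $\Aman$, so $y\notin\overline{\Aman}$; as $y$ was an arbitrary point outside $\Aman$, this yields $\overline{\Aman}\subset\Aman$, \ie $\Aman$ is closed.

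The argument is entirely elementary, so there is no real obstacle; the only conceptual point worth flagging is that the $T_1$ axiom is exactly what is needed and used, namely it is precisely what allows one to delete the finitely many stray points of $\Aman$ lying near $y$ by removing a closed set. Without $T_1$ a locally finite set need not be closed or discrete, so this hypothesis cannot be dropped, which is why the full equivalence of \ref{enum:lm:T1_locfin:closed_discr}, \ref{enum:lm:T1_locfin:U}, \ref{enum:lm:T1_locfin:SP_discrete} is only asserted under it.
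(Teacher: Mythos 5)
Your proof is correct, and the first two implications ((1)$\Rightarrow$(2) and (2)$\Rightarrow$(3)) coincide with the paper's argument. Where you genuinely diverge is in (3)$\Rightarrow$(1) under the $T_1$ hypothesis. You argue pointwise: given $y$, you take a neighborhood $\Uman$ meeting $\Aman$ in a finite set, observe that the finite set $F=(\Uman\cap\Aman)\setminus\{y\}$ is closed because $\Yman$ is $T_1$, and shrink to $\Vman=\Uman\setminus F$, which meets $\Aman$ in at most $\{y\}$; this single construction yields discreteness (case $y\in\Aman$) and closedness (case $y\notin\Aman$) simultaneously --- and in fact it proves the stronger implication (3)$\Rightarrow$(2), closing the cycle of all three conditions directly. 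The paper instead argues globally: it notes that every subset $B\subset\Aman$ is itself locally finite, and since singletons are closed in a $T_1$-space, $B$ is a union of a locally finite family of closed sets and hence closed in $\Yman$; thus \emph{every} subset of $\Aman$ is closed, which gives closedness (take $B=\Aman$) and discreteness (take $B=\Aman\setminus\{y\}$) at once. The two routes are close cousins --- the standard lemma that a locally finite union of closed sets is closed, on which the paper leans, is itself proved by essentially your shrinking argument --- but yours is more self-contained and elementary, while the paper's yields the slightly stronger structural by-product that all subsets of $\Aman$ are closed in $\Yman$. Your closing remark that $T_1$ cannot be dropped is also apt: in an indiscrete space any singleton is locally finite yet neither closed nor discrete as a subspace.
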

\begin{proof}
\ref{enum:lm:T1_locfin:closed_discr}$\Rightarrow$\ref{enum:lm:T1_locfin:U}.
Suppose $\Aman$ is closed and discrete and let $y\in\Yman$.
If $y\not\in\Aman$, then $\Vman = \Yman\setminus\Aman$ is an open neighborhood of $y$ that does not intersect $\Aman$.
If $y\in\Aman$, then discreteness of $\Aman$ implies that there exists an open neighborhood $\Vman$ of $y$ such that $\Vman\cap\Aman = \{y\}$.

The implication \ref{enum:lm:T1_locfin:U}$\Rightarrow$\ref{enum:lm:T1_locfin:SP_discrete} is evident.

It remains to prove the implication \ref{enum:lm:T1_locfin:SP_discrete}$\Rightarrow$\ref{enum:lm:T1_locfin:closed_discr} under the assumption that $\Yman$ is $T_1$.
Suppose $\Aman$ is locally finite.
Then each subset $B\subset\Aman$ is locally finite as well.
Moreover, as every point $y\in\Yman$ is a closed subset, it follows that $B$ is closed as a union of a locally finite family of its closed one-point subsets.
In other words every subset of $\Aman$ is closed in $\Yman$.
Hence $\Aman$ is closed and discrete.
\end{proof}

\subsection{One-dimensional $T_1$ manifolds}
Let $\Yman$ be a $T_1$ topological space locally homeomorphic with $[0,1)$.
In other words, $\Yman$ is a one-dimensional manifold which is $T_1$ but not necessarily Hausdorff.
Then $\Yman$ is also locally compact, and by Corollary~\ref{cor:pi0_homeo} one can equally regard $\pi_0\HY$ as the group of path components of $\HY$ and also as the group of isotopy classes of homeomorphisms of $\Yman$.

As usual, a point $y\in\Yman$ is called \myemph{internal}, if it has an open neighborhood homeomorphic with $(0,1)$.
Otherwise, $y$ has an open neighborhood homeomorphic with $[0,1)$ and is called a \myemph{boundary} point.
As usual, the sets of all internal and boundary points will be denoted by $\Int{\Yman}$ and $\partial\Yman$ respectively.

The following Lemma~\ref{lm:spec_pt_manif} characterizes the 	identity path component of $\Yman$ under assumption that the set of its \branch\ points is locally finite.

\begin{sublemma}\label{lm:spec_pt_manif}
Let $\Yman$ be a $T_1$ space locally homeomorphic with $[0,1)$ such that the set $\specPoints$ of its \branch\ points is locally finite and every connected component of the complement $A := \Yman\setminus\specPoints$ is second-countable.
Let also $z\in\specPoints$ be any \branch\ point and $A_z:=A\cup\{z\}$.
Then the following statements hold.
\begin{enumerate}[label={\rm(\arabic*)}, leftmargin=*, itemsep=1ex]
\item\label{enum:lm:spec_pt_manif:A_Az_manif}
$A$ and $A_z$ are open and Hausdorff.

\item\label{enum:lm:spec_pt_manif:A_ha_open_comp}
Every connected component $\dComp$ of $A$ or of $A_z$ is open in $\Yman$ and is homeomorphic to one of the following spaces: the circle $S^1$, $[0,1]$, $[0,1)$, $(0,1)$.
In particular, $\dComp$ is orientable.

\item\label{enum:lm:spec_pt_manif:compactness}
If a connected component $\dComp$ of $A$ is compact then it is a connected component of $\Yman$.

\item\label{enum:lm:spec_pt_manif:isotopies}
Let $\kdif\in\HY$.
Then $\kdif\in\HZY$ if and only if
\begin{enumerate}[leftmargin=*, label={\rm(Y\Alph*)}]
\item\label{enum:cond_Hid:z}
$\kdif$ is fixed on $\Tman:=\partial\Yman\cup\specPoints$;

\item\label{enum:cond_Hid:C}
$\kdif(\dComp)=\dComp$ for every connected component $\dComp$ of $\Yman\setminus\Tman$, and the restriction map $\kdif|_{\dComp}:\dComp\to \dComp$ is isotopic to $\id_{\dComp}$ (which in the case of $1$-dimensional manifolds is equivalent to the assumption that $\kdif|_{\dComp}$ preserves orientation of $\dComp$).
\end{enumerate}
If these conditions hold, then $\kdif$ is also fixed on $\partial\Yman$.
\end{enumerate}
\end{sublemma}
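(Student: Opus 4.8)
The plan is to prove the four assertions in order, leveraging local finiteness of $\specPoints$ to pass to the Hausdorff, locally Euclidean situation on the complement, and then to transfer isotopy data component by component.

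First I would dispose of~\ref{enum:lm:spec_pt_manif:A_Az_manif}--\ref{enum:lm:spec_pt_manif:compactness}. Since $\Yman$ is $T_1$ and $\specPoints$ is locally finite, Lemma~\ref{lm:T1_locfin} makes $\specPoints$ closed and discrete, hence so is each of its subsets; therefore $A=\Yman\setminus\specPoints$ and $A_z=\Yman\setminus(\specPoints\setminus\{z\})$ are open, and their Hausdorffness is precisely Lemma~\ref{lm:specpt}\ref{enum:lm:specpt:Y_is_h} and~\ref{enum:lm:specpt:A_is_h} applied with $\Aman=\Yman$. As manifolds are locally connected, the components of these open sets are open in $\Yman$; each is a connected Hausdorff $1$-manifold, second-countable by hypothesis for $A$ and, for $A_z$, because the component through $z$ is the union of $\{z\}$ with the at most two components of $A$ meeting it. The classification of $1$-manifolds then yields the list $S^1,[0,1],[0,1),(0,1)$ of~\ref{enum:lm:spec_pt_manif:A_ha_open_comp}, all orientable. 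For~\ref{enum:lm:spec_pt_manif:compactness}, a compact component $\dComp$ of $A$ is closed in $\Yman$ by Corollary~\ref{cor.compact_is_closed} and open by~\ref{enum:lm:spec_pt_manif:A_ha_open_comp}, hence a clopen connected set, \ie a component of $\Yman$.

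Next, for~\ref{enum:lm:spec_pt_manif:isotopies} I would first use local compactness of $\Yman$: by Corollary~\ref{cor:pi0_homeo}\ref{enum:cor:pi0_homeo:loc_comp} one has $\HZY=\tHomeoId{\Yman}$, so $\kdif\in\HZY$ is equivalent to the existence of an isotopy $\Hmap\colon[0,1]\times\Yman\to\Yman$ with $\Hmap_0=\id$, $\Hmap_1=\kdif$, and I may pass between isotopies and paths via Corollary~\ref{cor:CAB_homotopies}. For the forward implication fix such an $\Hmap$; each $\Hmap_t$ preserves the topologically defined sets $\specPoints$, $\partial\Yman$, hence $\Tman$ and $A$. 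For $z\in\specPoints$ the path $t\mapsto\Hmap_t(z)$ lies in the discrete set $\specPoints$, so is constant; for $x$ in a component $\dComp$ of $\Yman\setminus\Tman$ the path $t\mapsto\Hmap_t(x)$ avoids $\specPoints$ and $\partial\Yman$, hence stays in $\dComp$, and running the same argument for $\Hmap_t^{-1}$ gives $\Hmap_t(\dComp)=\dComp$, whence $\kdif|_{\dComp}\simeq\id_{\dComp}$, \ie orientation preserving --- this is~\ref{enum:cond_Hid:C}. Finally, for $y\in\partial\Yman\setminus\specPoints$ the path $t\mapsto\Hmap_t(y)$ stays in the component of $A$ through $y$ and in its discrete boundary, so is constant, which together gives~\ref{enum:cond_Hid:z} and fixedness on $\partial\Yman$.

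The \emph{main obstacle} is the converse. Assuming~\ref{enum:cond_Hid:z}--\ref{enum:cond_Hid:C}, I would patch an isotopy: on each circle component of $\Yman\setminus\Tman$ --- clopen in $\Yman$ by~\ref{enum:lm:spec_pt_manif:compactness}, so isolated --- pick any isotopy from $\id$ to the orientation-preserving $\kdif|_{\dComp}$; on each interval component use that $\kdif$ extends to $\overline{\dComp}$ fixing its endpoints in $\Tman$ and take the affine isotopy $\phi_t=(1-t)\,\id+t\,\kdif$ read in an interval chart, which fixes those endpoints for all $t$ and satisfies $\phi_t(s)\to p$ uniformly in $t$ as $s\to p$. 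Setting $\Hmap=\id$ on $\Tman$ and $\Hmap_t=\phi_t$ on the interval components gives a $t$-family of bijections; the delicate point is joint continuity of $\Hmap$ at $\Tman$, where the non-Hausdorff topology bites. Here local finiteness of $\specPoints$ and the local manifold structure are decisive: after shrinking, a neighborhood of $p\in\Tman$ meets $\Tman$ only in $p$ and meets at most two components of $\Yman\setminus\Tman$, so the uniform convergence $\phi_t(s)\to p$ on these finitely many pieces produces, for any neighborhood $W$ of $p$, a neighborhood $V'$ with $\Hmap([0,1]\times V')\subset W$; the same estimate for $\phi_t^{-1}$ shows each $\Hmap_t$ is a homeomorphism. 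Thus $\Hmap$ is an isotopy from $\id$ to $\kdif$, so $\kdif\in\HZY$, completing the equivalence; the closing assertion is immediate since $\partial\Yman\subseteq\Tman$.
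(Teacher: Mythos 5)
Your proposal is correct, and while items~\ref{enum:lm:spec_pt_manif:A_Az_manif}--\ref{enum:lm:spec_pt_manif:compactness} and the necessity half of~\ref{enum:lm:spec_pt_manif:isotopies} follow the paper's route almost verbatim (topological invariance of $\specPoints$ and $\partial\Yman$, discreteness of $\Tman$ via Lemma~\ref{lm:T1_locfin}, constancy of paths in a discrete set, restriction of the ambient isotopy to each invariant component), your sufficiency half handles the one genuinely hard point --- joint continuity of the glued isotopy at points of $\Tman$ --- by a different method. The paper takes an \emph{arbitrary} isotopy $\Fmap^{\dComp}$ from $\id_{\dComp}$ to $\kdif|_{\dComp}$ on each component, extends by the identity on $\Tman$, covers $\Yman$ by the open sets $\dComp_z$, and deduces continuity at each $z\in\Tman$ from Crowell's one-point extension theorems~\cite{Crowell:ProcAMS:1963} (Theorem~2.1 for compact $\dComp_z$, an adaptation of Theorem~3.2 otherwise); this in effect proves the stronger fact that \emph{any} family of component isotopies glues. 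You instead build a specific isotopy --- the fibrewise convex combination $\phi_t=(1-t)\,\id+t\,k$ read in a chart $\dComp\cong(0,1)$ --- and verify continuity by hand via the sandwich $\phi_t(s)\in\bigl[\min(s,k(s)),\max(s,k(s))\bigr]$, which tends to the relevant end uniformly in $t$ because an orientation-preserving homeomorphism of $(0,1)$ automatically preserves the ends; the analogous estimate $\phi_t^{-1}(u)\in\bigl[\min(u,k^{-1}(u)),\max(u,k^{-1}(u))\bigr]$ gives continuity of the inverse family, so each level is a homeomorphism. This buys a self-contained, citation-free argument at the cost of establishing only what the lemma needs. Two phrasings deserve repair, though neither breaks the proof: (i) a small neighborhood of $p\in\Tman$ meets at most two local \emph{branches}, but these may be the two ends of the \emph{same} component of $\Yman\setminus\Tman$ (namely when the component of $(\Yman\setminus\Tman)\cup\{p\}$ through $p$ is a circle), so ``at most two components'' should read ``at most two branches''; (ii) in the non-Hausdorff setting $\overline{\dComp}$ can contain several branch points at a single end (line with two origins), so ``$\kdif$ extends to $\overline{\dComp}$ fixing its endpoints'' is not literally meaningful --- but your estimate never uses such an extension, only that $k(s)$ tends to the appropriate end of the chart, which is automatic, so the argument stands once that sentence is dropped.
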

\begin{proof}
\ref{enum:lm:spec_pt_manif:A_Az_manif}
By Lemma~\ref{lm:specpt}, $A$ and $A_z$ are \myemph{Hausdorff}.
Moreover, as $\Yman$ is $T_1$ and $\specPoints$ is locally finite, it follows that $\specPoints$ and $\specPoints\setminus\{z\}$ are closed in $\Yman$, whence their complements $A$ and $A_z$ are \myemph{open}.

\ref{enum:lm:spec_pt_manif:A_ha_open_comp}
This statement is proved in~\cite[Lemma~2.3]{MaksymenkoPolulyakh:MFAT:2016}.
Let us recall the arguments.
Since $A_z$ is open in $\Yman$ and $\Yman$ is locally path connected, it follows that every connected component $\dComp$ of $A$ and $A_z$ is open in $\Yman$ as well.
Moreover, by assumption every connected component of $A$ is second countable.
As $z$ has a countable base of open neighborhoods each being a union of $z$ and at most two open subsets of $A$, it follows that the connected components of $A_z$ are also \myemph{second countable}.
Hence $\dComp$ is a connected Hausdorff $1$-manifold, and the results follows from the classification of such manifolds, \eg \cite[Exercises~1.2.6 and~1.4.9]{Hirsch:DiffTop}.

\ref{enum:lm:spec_pt_manif:compactness}
The connected set $\dComp$ is both open, see~\ref{enum:lm:spec_pt_manif:A_Az_manif} and closed, see Corollary~\ref{cor.compact_is_closed}.

\ref{enum:lm:spec_pt_manif:isotopies}
\myemph{Necessity.}
Let $\kdif\in\HZY$, so there is an isotopy $\Fmap:[0,1]\times\Yman\to\Yman$ such that $\Frestr{0}=\id_{\Yman}$ and $\Frestr{1}=\kdif$.
We need to check conditions~\ref{enum:cond_Hid:z} and~\ref{enum:cond_Hid:C} for $\kdif$.

First notice that $\specPoints$ and $\partial\Yman$ are defined in topological terms, \ie they are invariant under self-homeomorphisms of $\Yman$.
Moreover, each $z\in\partial\Yman\setminus\specPoints$ has a neighborhood $\Uman_z$ such that $\Uman_z\cap\Tman = \{z\}$.
Since $\specPoints$ is locally finite, and therefore discrete due to Lemma~\ref{lm:T1_locfin}, we obtain that $\Tman$ is discrete as well.
This implies that $\Fmap([0,1]\times z) \subset \Tman$ for any $z\in\Tman$, and therefore $\Fmap([0,1]\times z)$ is contained in some connected component of $\Tman$, being a one-point set since $\Tman$ is discrete.
Hence $\Frestr{t}(z) =\Frestr{0}(z) = z$ for all $t\in[0,1]$.
In particular, $\kdif$ is fixed on $\Tman$, which proves~\ref{enum:cond_Hid:z}.

As $\Yman\setminus\Tman$ is also invariant under self-homeomorphisms of $\Yman$, we obtain by similar arguments that $\Fman([0,1]\times \dComp) \subset\dComp$ for every connected component $\dComp$ of $\Yman\setminus\Tman$.
Thus the restriction
\[ \Fman|_{[0,1]\times \dComp}:[0,1]\times \dComp\to\dComp\]
is an isotopy between $\id_{\dComp}$ and $\kdif|_{\dComp}$.
Therefore, $\kdif|_{\dComp}:\dComp\to\dComp$ preserves orientation of $\dComp$ which proves~\ref{enum:cond_Hid:C}.

\myemph{Sufficiency.}
Let $\kdif\in\HY$ be a homeomorphism satisfying~\ref{enum:cond_Hid:z} and~\ref{enum:cond_Hid:C}.
For every connected component $\dComp$ of $\Aman=\Yman\setminus\Tman$ fix an arbitrary isotopy $\Fmap^{\dComp}:[0,1]\times\dComp\to\dComp$ such that $\Frestr{0}^{\dComp}=\id_{\dComp}$ and $\Frestr{1}^{\dComp}=\kdif|_{\Cman}$ and define the map $\Gmap:[0,1]\times\Yman\to\Yman$ by
\[
\Gmap(t,\ptx) =
\begin{cases}
\ptx, & \ptx\in\Tman, \\
\Fmap^{\dComp}(t,\ptx), & \text{$\ptx\in\dComp$, where $\dComp$ is a connected component of $\Yman\setminus\Tman$}.
\end{cases}
\]

Then $\Grestr{0} =\id_{\Yman}$, $\Grestr{1} =\kdif$, and each $\Grestr{t}$ is a bijection of $\Yman$.
Notice that $\Gman|_{\Aman\times[0,1]}$ is an isotopy of $\Aman$, and $\Gman$ is its extension to $\Yman\times[0,1]$ fixed at \branch\ points.
We need to show that \myemph{$\Gman$ is an isotopy of $\Yman$}.

For each $z\in\Tman$ let $\dComp_{z}$ be a connected component of $\Aman_z := \Aman\cup\{z\}$ containing $z$.
Then $\Aman \cup \{\dComp_z\}_{z\in\Tman}$ is an open cover of $\Yman$.
Since the restriction $\Gman|_{\Aman\times[0,1]}:\Aman\times[0,1]\to\Aman$ is an isotopy, it suffices to show that for each $z\in\Tman$, the restriction $\Gman:\dComp_{z}\times[0,1]\to\dComp_{z}$ is an isotopy as well.

If $\Cman_z$ is compact, \ie it is homeomorphic either to $S^1$ or to $[0,1]$, then $\Cman_z$ is a one-point compactification of $\Cman$, and continuity of $\Gman|_{\dComp_{z}\times[0,1]}$ follows from~\cite[Theorem~2.1]{Crowell:ProcAMS:1963}.

Otherwise, $(\Cman_z,z)$ is homeomorphic to one of the following pointed spaces:
\[ \bigl((-1,1), 0\bigr),  \qquad \bigl([-1,1), 0\bigr), \qquad  \bigl([0,1), 0\bigr). \]
In this case the proof can proceed similarly to the arguments of~\cite[Theorem~3.2]{Crowell:ProcAMS:1963} extending the previous case one-point compactification of $\Cman$ to the case of the so-called ideal Freudental compactification.
In fact, the proof of~\cite[Theorem~3.2]{Crowell:ProcAMS:1963} consists of proving continuity of $\Gmap$ for each added point, which in our situation is $z$ only.
\end{proof}

\begin{subremark}
The problem of one-point extensions of isotopies (as in the proof of continuity of $\Gmap$) were studied by R.~H.~Crowell~\cite[Theorem~2.1]{Crowell:ProcAMS:1963}, H.~Gluck~\cite{Gluck:BAMS:1963} and many others.
Also, certain variants of such results for concordancies of homeomorphisms considered in L.~S.~Husch and T.~B.~Rushing~\cite{HuschRushing:MichJM:1969}, applications to ambient isotopies of arcs in manifolds~\cite{Feustel:ProcAMS:1966, MartinRolfsen:ProcAMS:1968}, and applications to Nielsen numbers are given in P.~Heath and X.~Zhao~\cite[\S4]{HeathZhao:TopApp:1997}.
\end{subremark}

\section{Striped surfaces}\label{sect:striped_surf}
In this section we will briefly recall the results by W.~Kaplan, S.~Maksymenko, E.~Polulyakh, and Yu.~Soroka.

\subsection{Quasi-striped surfaces}
The following notion of a quasi-strip is a ``basic block'' generating foliations on the plane.
\begin{subdefinition}\label{def:quasi_strip}
A foliated surface $(\strip,\Partition_{\strip})$ will be called a \myemph{quasi-strip} whenever
\begin{enumerate}[label={\rm\alph*)}, leftmargin=*, itemsep=1ex]
\item\label{enum:quasi_strip:1}
every boundary component of $\strip$ is an open arc being also a leaf of $\Partition_{\strip}$;
\item\label{enum:quasi_strip:2}
the restriction of $\Partition_{\strip}$ to the interior $\Int{\strip}$ is foliated homeomorphic with $\bR^2$ endowed by trivial foliation into horizontal lines $\{\bR\times y\}_{y\in\bR}$.
\end{enumerate}
\end{subdefinition}

\begin{subdefinition}
Let $\Partition=\{\bR\times y\}_{y\in[0,1]}$ be the trivial foliation on $\bR\times[0,1]$ into horizontal lines.
Then for every \myemph{open} (in the induced topology) subset $\strip\subset\bR\times[0,1]$ such that $\bR\times(0,1) \subset \strip$ the foliated surface $(\strip,\Partition|_{\strip})$ will be called a \myemph{model strip}.
In this case we will use the following notation:
\begin{align*}
	\bSide{0}{\strip} &:= \strip \, \cap \, (\bR \times \lbrace 0 \rbrace), &
    \bSide{1}{\strip} &:= \strip \, \cap \, (\bR \times \lbrace 1 \rbrace), \\
    \partial\strip    &:= \bSide{0}{\strip} \, \cup \, \bSide{1}{\strip}, &
    \Int{\strip}      &:= \bR\times(0,1),
\end{align*}
and call $\partial\strip$ the \myemph{boundary} of $\strip$, and $\bSide{0}{\strip}$ and $\bSide{1}{\strip}$ the \myemph{boundary sides} of $\strip$.
\end{subdefinition}
Thus a model strip is a union of $\bR\times(0,1)$ with at most countably many (possibly zero) open mutually disjoint intervals contained in $\bR\times\{0,1\}$.
Evidently, each model strip is a quasi-strip as well.
The difference between these two notions is that in the model strip all boundary components are distributed between two boundary sides only, while quasi-strips may have boundary components between internal leaves.

\begin{subdefinition}\label{def:quasi-striped_atlas}
Let $\stripSurf$ be a two-dimensional topological manifold (a surface) and $\preStripSurf = \bigsqcup \limits_{\stInd \in \StInd} \strip_{\stInd}$ be a family of mutually disjoint \myemph{quasi-strips} $\{\strip_{\stInd}\}_{\stInd \in \StInd}$.
A \myemph{quasi-striped atlas} on $\stripSurf$ is a map $\qmap: \preStripSurf \to \stripSurf$ such that
\begin{enumerate}[leftmargin=*, label={\rm\arabic*)}, itemsep=1ex]
\item\label{enum:def:quasi-striped_atlas:1}
$\qmap$ is a \myemph{quotient} map, \ie it is continuous, surjective, and a subset $\Uman\subset\stripSurf$ is open if and only if $\qmap^{-1}(\Uman) \cap \strip_{\stInd}$ is open in $\strip_{\stInd}$ for each $\stInd\in\StInd$;

\item\label{enum:def:quasi-striped_atlas:2}
there exist two disjoint families $\mathcal{X} = \{\bdX_\bdGlueInd\}_{\bdGlueInd\in\BdGlueInd}$ and $\mathcal{Y} = \{\bdY_\bdGlueInd\}_{\bdGlueInd\in\BdGlueInd}$ of mutually distinct boundary intervals of $\preStripSurf$ enumerated by the same set of indexes $\BdGlueInd$ such that
\begin{enumerate}[leftmargin=*, label={\rm(\alph*)}, itemsep=1ex]
\item\label{enum:def:quasi-striped_atlas:2:a}
$\qmap$ is injective on $\preStripSurf \setminus (\mathcal{X} \cup \mathcal{Y})$;
\item\label{enum:def:quasi-striped_atlas:2:b}
$\qmap(\bdX_\bdGlueInd) = \qmap(\bdY_\bdGlueInd)$ for each $\bdGlueInd\in\BdGlueInd$, and the restrictions
\[
    \bdX_\bdGlueInd \xrightarrow{~\qmap|_{\bdX_\bdGlueInd}~} \qmap(\bdX_\bdGlueInd) = \qmap(\bdY_\bdGlueInd)
    \xleftarrow{~\qmap|_{\bdY_\bdGlueInd}~}  \bdY_\bdGlueInd
\]
are bijections.
\end{enumerate}
\end{enumerate}
The images $\qmap(\bdX_\bdGlueInd) = \qmap(\bdY_\bdGlueInd)$, $\bdGlueInd\in\BdGlueInd$, will be called \myemph{seams} of $\stripSurf$ (as well as of the striped atlas $q$).

If all strips $\{\strip_{\stInd}\}_{\stInd \in \StInd}$ are model, then the quasi-striped atlas $q$ will be called \myemph{striped}.
\end{subdefinition}
Condition~\ref{enum:def:quasi-striped_atlas:2:b} implies that for each $\bdGlueInd \in \BdGlueInd$ we have a well-defined ``gluing'' homeomorphism
$\phi_{\bdGlueInd} = \bigl(\qmap|_{\bdX_{\bdGlueInd}}\bigr)^{-1} \circ  \qmap|_{\bdY_{\bdGlueInd}}: \ \bdY_{\bdGlueInd} \to \bdX_{\bdGlueInd}$.
Thus a quasi-striped surface is obtained from a family of quasi-strips by gluing them along certain pairs of boundary intervals by homeomorphisms $\phi_{\bdGlueInd}$.
It is allowed to glue two quasi-strips along more than one pair of boundary components, and one may also glue boundary components belonging to the same quasi-strip.

Notice that the foliations $\Partition_{\strip_{\stInd}}$ on strips $\strip_{\stInd}$ give a unique foliation $\Partition_0$ on $\preStripSurf$.
In this case if $\gamma$ is a leaf of $\Partition_0$ contained in a strip $\strip_{\stInd}$, then we will say that \myemph{$\strip_{\stInd}$ is a strip of $\gamma$}.

Also, the images of leaves of $\Partition_0$ under $\qmap$ constitute a one-dimensional foliation $\Partition$ on $\stripSurf$ which we will call the \myemph{canonical} foliation associated with the quasi-striped atlas $\qmap$.

\begin{subdefinition}
A foliated surface $(\stripSurf,\Partition)$ will be called a \myemph{(quasi-)striped} surface whenever it admits a (quasi-)striped atlas whose canonical foliation is $\Partition$.
\end{subdefinition}

Now the results by W.~Kaplan can be formulated as follows.
\begin{subtheorem}[W.~Kaplan~\cite{Kaplan:DJM:1940, Kaplan:DJM:1941}]\label{th:Kaplan}
Every one-dimensional foliation $\Partition$ on the plane $\bR^2$ belongs to the class $\mathcal{F}$ and admits a quasi-striped atlas, \ie $\Partition$ is a canonical foliation associated with some quasi-striped atlas $\qmap$ on $\bR^2$.
\end{subtheorem}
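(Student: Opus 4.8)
The plan is to prove the two assertions in turn: first that $\Partition$ belongs to the class $\mathcal{F}$, and then that it carries a quasi-striped atlas. For membership, conditions \ref{enum:A1} and \ref{enum:A3} are immediate, since $\bR^2$ is second countable and has empty boundary (so \ref{enum:A3} is vacuous); the content is \ref{enum:A2}, that every leaf is a closed copy of $\bR$. First I would note that, as $\bR^2$ is simply connected, $\Partition$ is orientable and hence arises as the orbit partition of a fixed-point-free continuous flow $\phi$ (reparametrized to be complete, which alters neither the leaves nor their topology). A connected boundaryless $1$-manifold is homeomorphic to $\bR$ or $S^1$, so it remains to exclude closed leaves and non-proper leaves. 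A closed leaf $\leaf\cong S^1$ would, by the Jordan curve theorem, bound a disc $D$, and the restriction of $\Partition$ to $D$ would be a non-singular line field on $D$ having $\partial D$ as a leaf; since the Euler characteristic is nonzero, the index theorem for line fields forces an interior singularity, contradicting local regularity (see the Remark following Definition~\ref{def:leaves_types}). Hence there are no closed leaves.

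To rule out non-proper leaves I would run a topological Poincaré--Bendixson argument. Suppose a leaf $\leaf$ were not closed and pick $p\in\overline{\leaf}\setminus\leaf$. Choose a flow box $U\cong\bR\times(-1,1)$ about $p$ and a transversal arc $\tau$ through $p$; then $\leaf$ meets $\tau$ in a sequence $q_n\to p$ which, by orientability and the local product structure, is monotone along $\tau$. The subarc of $\leaf$ joining $q_n$ to $q_{n+1}$ together with the segment of $\tau$ between them bounds a Jordan domain which $\leaf$ can enter but never leave, yielding a trapping region that again forces a closed leaf or a singularity. Since both are excluded, every leaf is proper, hence a closed subset of $\bR^2$ homeomorphic to $\bR$, which establishes \ref{enum:A2}.

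For the atlas the strategy is to cut $\bR^2$ along its singular leaves. By \ref{enum:A2} and local regularity the leaf space $\Yman=\bR^2/\Partition$ is a (possibly non-Hausdorff) $1$-manifold whose \branch\ and boundary points correspond to the singular leaves. Let $\mathcal{R}$ be the union of all regular leaves; since regularity is an open, saturated condition, $\mathcal{R}$ is open and saturated, so its complement $\Sigma=\bR^2\setminus\mathcal{R}$ -- the union of all singular leaves -- is closed. Each connected component $V_\stInd$ of $\mathcal{R}$ is then an open saturated region foliated only by regular leaves with Hausdorff leaf space, hence foliated homeomorphic to a trivially foliated $\bR\times(a,b)\cong\bR^2$; adjoining to $V_\stInd$ the singular leaves of $\overline{V_\stInd}\setminus V_\stInd$ as boundary arcs produces a quasi-strip $\strip_\stInd$. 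Setting $\preStripSurf=\bigsqcup_\stInd\strip_\stInd$ and letting $\qmap:\preStripSurf\to\bR^2$ be the tautological map re-identifying the copies of each singular leaf, one checks that $\qmap$ is a quotient map, injective off the glued boundary intervals and restricting to bijections on each glued pair -- exactly conditions \ref{enum:def:quasi-striped_atlas:1} and \ref{enum:def:quasi-striped_atlas:2} of Definition~\ref{def:quasi-striped_atlas} -- and that its canonical foliation is $\Partition$.

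The main obstacle is the non-properness step of \ref{enum:A2}: excluding recurrent leaves in the purely topological ($C^0$) setting, where the smooth Poincaré--Bendixson theorem is unavailable and one must build the trapping Jordan domains by hand from the flow-box structure and close the argument via the Jordan curve theorem together with the index count that ruled out closed leaves. A second, more bookkeeping-heavy difficulty lies in the atlas construction: one must verify that each component $V_\stInd$ has interval rather than circular leaf space -- so that it is genuinely $\cong\bR^2$ -- and, because the singular leaves need not be locally finite here, that their possible accumulation is absorbed by allowing each quasi-strip to carry countably many boundary arcs, precisely as the notion of a model strip permits.
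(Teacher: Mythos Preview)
The paper does not prove Theorem~\ref{th:Kaplan}; it is quoted as a classical result of Kaplan and attributed to~\cite{Kaplan:DJM:1940, Kaplan:DJM:1941} with no argument supplied. There is therefore no ``paper's own proof'' against which to compare your proposal.

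That said, your outline is broadly in the spirit of Kaplan's original work, though he did not phrase things via flows. A couple of points deserve caution. First, in the atlas step you assert that each connected component $V_\stInd$ of the regular set, having Hausdorff leaf space, is foliated homeomorphic to $\bR\times(a,b)$; this is true but is itself a nontrivial structure theorem (it amounts to trivializing an $\bR$-bundle over a Hausdorff $1$-manifold and checking the base is an interval, not a circle), and it is essentially what Kaplan spends effort establishing. Second, ``adjoining the singular leaves of $\overline{V_\stInd}\setminus V_\stInd$ as boundary arcs'' does not automatically yield a quasi-strip in the sense of Definition~\ref{def:quasi_strip}: a single singular leaf can border $V_\stInd$ from both sides or accumulate on other boundary leaves, so one must construct the quasi-strip abstractly (as Kaplan does with his chordal systems) rather than take the literal closure in $\bR^2$. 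Your acknowledgement of the bookkeeping difficulty hints at this, but the construction as written is not yet a proof.
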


\begin{subremark}\rm
Let $(\stripSurf,\Partition)$ be any (oriented or non-oriented, compact or non-compact) connected surface distinct from $S^2$ and $\bR{P}^2$.
Then the universal covering of its interior $\Int{\stripSurf}$ is homeomorphic with $\bR^2$, \eg \cite[Corollary~1.8]{Epstein:AM:1966}.
Moreover, if $p:\bR^2 \to \Int{\stripSurf}$ is the corresponding universal covering map, then one has a foliation $F$ on $\bR^2$ induced from $\Partition$: the leaves of $F$ are connected components of the inverse images $p^{-1}(\leaf)$, $\leaf\in\Partition$.
There is also a natural free action of the fundamental group $\pi_1\stripSurf$ on $\bR^2$ by foliated homeomorphisms so that $(\Int{\stripSurf},\Partition|_{\Int{\stripSurf}})$ is the quotient $(\bR^2,F)/\pi_1\stripSurf$.

This observation shows that foliations on $\bR^2$ determine all foliations on all connected boundaryless surfaces distinct from $S^2$ and $\bR{P}^2$.
However, this does not help so much, since usually it is hard to explicitly describe the action of $\pi_1\stripSurf$ of $\bR^2$ by covering transformations.

Also, due to Theorem~\ref{th:Kaplan} the foliated surface $(\bR^2, F)$ admits a quasi-striped atlas.
However, again such a splitting into quasi-strips in general is not related with the action of $\pi_1\stripSurf$.
Therefore it is still better to work with the surface $\stripSurf$ instead of $\bR^2$.
\end{subremark}

In fact, the splitting of a quasi-striped surface into quasi-strips proposed by Kaplan in Theorem~\ref{th:Kaplan} is not unique.
This led the present authors to consider quasi-striped surfaces glued from model strips only, and it turned out that splitting into such strips (if it exists) is unique, see Theorem~\ref{th:reduced_striped_surf}.

\begin{subremark}
Let us discuss distinct types of leaves of $\Partition$.
\begin{figure}[htbp!]
\includegraphics[width=\textwidth]{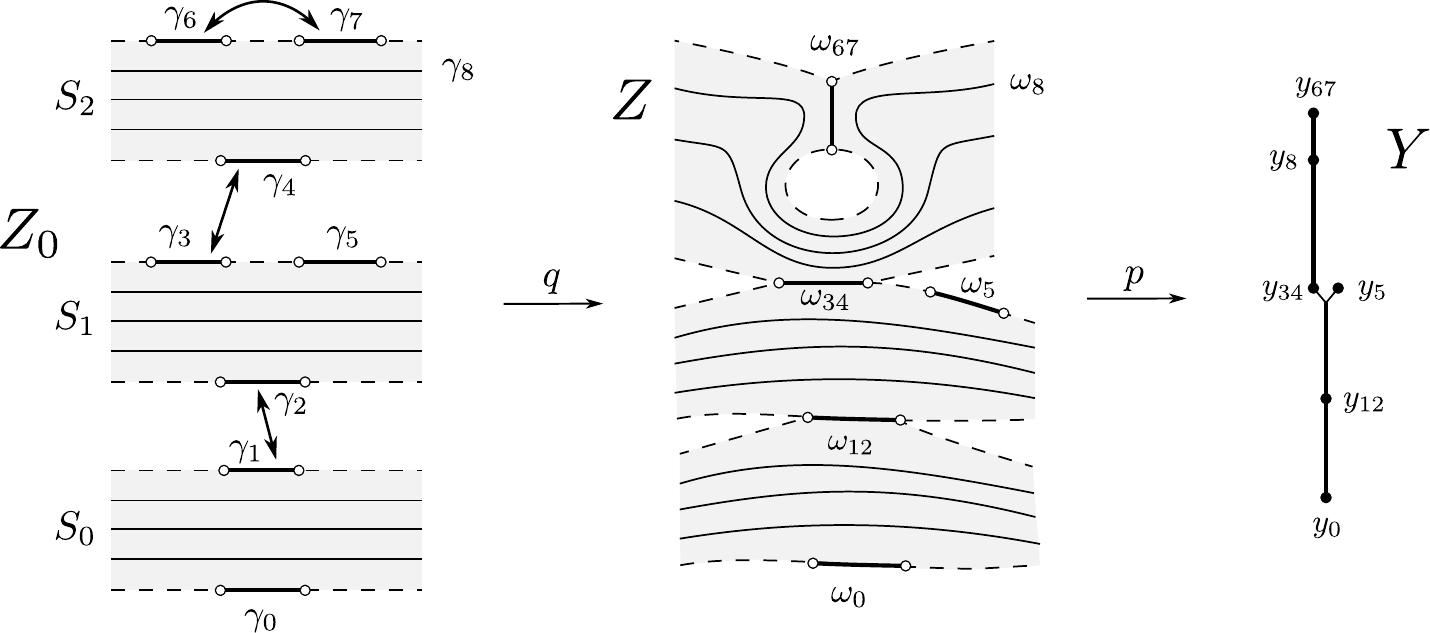}
\caption{}\label{fig:strips_example}
\end{figure}
\begin{enumerate}[wide, label={\arabic*)}]
\item
For each leaf $\gamma=\bR\times\{t\} \subset\Int{\strip_{\stInd}}$, $t\in(0,1)$,
\begin{align*}
	&\omega:=\qmap(\gamma)\in\PartitionReg, &
	&y:=\pr(\omega) \in \Int{\Yman}\setminus\specPoints,
\end{align*}
see the leaf $\omega_8$ in Figure~\ref{fig:strips_example}.

\item
Moreover, suppose $\omega\in\Partition$ is a seam, so $\qmap^{-1}(\omega) = \bdX_\bdGlueInd \cup \bdY_\bdGlueInd$, where $\bdX_\bdGlueInd \subset \bSide{\eps}{\strip_{\stInd}}$, and $\bdY_\bdGlueInd \subset \bSide{\eps'}{\strip_{\stInd'}}$ for some $\eps,\eps'\in\{0,1\}$ and $\stInd,\stInd'\in\StInd$.
Then the following characterizations hold.
\begin{enumerate}[label={\rm\alph*)}, itemsep=1ex, leftmargin=*]
\item\label{enum:regular_int_leaf}
$\omega\in\PartitionReg$ if and only if $\bdX_\bdGlueInd=\bSide{\eps}{\strip_{\stInd}}$ and $\bdY_\bdGlueInd=\bSide{\eps'}{\strip_{\stInd'}}$, see leaf $\omega_{12}$ in Figure~\ref{fig:strips_example}.

\item
$\omega\in\PartitionSing\setminus\PartitionSpec$ if and only if $\stInd=\stInd'$, $\eps=\eps'$, and $\bSide{\eps}{\strip_{\stInd}} = \bdX_\bdGlueInd \cup \bdY_\bdGlueInd$.
In other words, $\bSide{\eps}{\strip_{\stInd}}$ consists of two boundary intervals, and $\omega$ is obtained by gluing those inervals.
In this case $y:=\pr(\omega)\in\partial\Yman \setminus\specPoints$, so $\Yman$ is \myemph{Hausdorff} at $y$, see the leaf $\omega_{67}$ in Figure~\ref{fig:strips_example}.

\item
In all other cases, $\omega\in\PartitionSpec$, and $\hcl{\omega}$ contains all leaves from
\[ \qmap(\bSide{\eps}{\strip_{\stInd}}\cup\bSide{\eps'}{\strip_{\stInd'}}),\]
see leaves $\omega_{34}$ and $\omega_{5}$ in Figure~\ref{fig:strips_example}.
\end{enumerate}
\end{enumerate}
\end{subremark}

Notice that in the case~\ref{enum:regular_int_leaf} one can replace $\strip_{\stInd} \cup \strip_{\stInd'}$ with a single strip, and get another striped atlas for $(\stripSurf,\Partition)$ having one less strip.
This leads to the following notion.

A striped atlas will be called \myemph{reduced} whenever
\begin{equation}\label{equ:reduced_atlas}
\qmap(\partial\preStripSurf) = \partial\stripSurf \cup \PartitionSing.
\end{equation}

The following statement is easy and we leave it for the reader.
\begin{sublemma}[{cf.~\cite[Lemma~2.1]{MaksymenkoPolulyakh:MFAT:2016}}]\label{lm:char_leaves}
Let $\qmap:\preStripSurf \to\stripSurf$ be a striped atlas, with $\Partition$ the corresponding canonical foliation on $\stripSurf$, $\pr:\stripSurf\to\Yman$ the projection to the space of leaves, and $\specPoints$ the set of special points of $\Yman$.
Then
\[ \pr(\hcl{\omega}) = \hcl{\pr(\omega)} \]
for every leaf $\omega\in\Partition$.
Moreover,
\begin{align*}
&\pr(\PartitionSpec) = \specPoints, &
&\pr(\PartitionSing \setminus \PartitionSpec) \subset \partial\Yman\setminus\specPoints, &
&\pr(\partial\stripSurf\cup\PartitionSing) = \partial\Yman\cup\specPoints.
\end{align*}
If $\qmap$ is \myemph{reduced}, then for every connected component $\dComp$ of $\Yman\setminus\bigl(\partial\Yman\cup\specPoints)$, $\qmap^{-1}(\pr^{-1}(\dComp))$ is an interior of some model strip $\strip_{\stInd}\subset\preStripSurf$.
Define the following path $\gamma_{\dComp}:(0,1) \to  \Int{\strip_{\stInd}} = \bR\times(0,1)$ by $\gamma_{\dComp}(t) = (0, t)$, $t\in(0,1)$.
Then the composition $\pr\circ\qmap\circ\gamma_{\dComp}:(0,1) \to \dComp$ is a homeomorphism.
\qed
\end{sublemma}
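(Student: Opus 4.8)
The plan is to reduce everything to one structural fact: the projection $\pr:\stripSurf\to\Yman$ is an \emph{open} map. This holds because $\Partition$ is locally trivial (every point has a flow-box neighborhood foliated by parallel plaques), so the saturation $\sat{\Uman}=\pr^{-1}(\pr(\Uman))$ of an open set $\Uman$ is again open, which is exactly openness of $\pr$. This openness is the one genuine point, and the step I expect to be the \emph{main obstacle}, since it is precisely where non-Hausdorffness of $\Yman$ could cause trouble; it is rescued by local triviality. Granting it, for any open $\Vman\subset\Yman$ one obtains $\overline{\pr^{-1}(\Vman)}=\pr^{-1}(\overline{\Vman})$: the inclusion $\subset$ is continuity of $\pr$, while for $\supset$ one takes $x\in\pr^{-1}(\overline{\Vman})$ and an open neighborhood $N\ni x$, notes that $\pr(N)$ is open and meets $\Vman$, and concludes $N$ meets $\pr^{-1}(\Vman)$. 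Since $\qmap$ (hence $\pr$) is a quotient map and $\pr$ is open, saturated open neighborhoods $\Uman$ of a leaf $\omega$ correspond bijectively to open neighborhoods $\Vman=\pr(\Uman)$ of $y:=\pr(\omega)$, with $\Uman=\pr^{-1}(\Vman)$ and therefore $\overline{\Uman}=\pr^{-1}(\overline{\Vman})$. Intersecting over all such $\Uman$ yields $\hcl{\omega}=\pr^{-1}(\hcl{y})$, and applying the surjection $\pr$ gives the first assertion $\pr(\hcl{\omega})=\hcl{\pr(\omega)}$.

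From $\hcl{\omega}=\pr^{-1}(\hcl{\pr(\omega)})$ the remaining identities follow almost formally. Since $\pr^{-1}(y)=\omega$, we have $\hcl{\omega}\setminus\omega=\pr^{-1}\bigl(\hcl{y}\setminus\{y\}\bigr)$, so $\omega$ is special iff $y$ is a special point; surjectivity of $\pr$ then gives $\pr(\PartitionSpec)=\specPoints$. For the inclusions involving $\partial\Yman$ I would run through the leaf types listed in the Remark preceding this lemma: a regular interior leaf projects to a point of $\Int{\Yman}\setminus\specPoints$ (its trivially foliated saturated neighborhood maps to an open arc $\cong(-1,1)$), a singular non-special seam (the case $\stInd=\stInd'$, $\eps=\eps'$) projects to a Hausdorff boundary point, i.e. a point of $\partial\Yman\setminus\specPoints$, a regular boundary leaf likewise projects into $\partial\Yman\setminus\specPoints$, and a special leaf projects into $\specPoints$. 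Reading these off gives $\pr(\PartitionSing\setminus\PartitionSpec)\subset\partial\Yman\setminus\specPoints$ and, combined with $\pr(\PartitionSpec)=\specPoints$, the inclusion $\pr(\partial\stripSurf\cup\PartitionSing)\subset\partial\Yman\cup\specPoints$; the reverse inclusion uses that any $y\in\partial\Yman\cup\specPoints$ has a leaf which cannot be regular interior (those project into $\Int{\Yman}\setminus\specPoints$) and so lies in $\partial\stripSurf\cup\PartitionSing$.

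For the reduced case the key point is that the strip interiors partition the regular locus $\Yman\setminus(\partial\Yman\cup\specPoints)$. Indeed, by \eqref{equ:reduced_atlas} together with $\pr(\partial\stripSurf\cup\PartitionSing)=\partial\Yman\cup\specPoints$, the leaves over this locus are exactly the regular interior non-seam leaves, whose $\qmap$-preimages are single interior lines. Each strip interior $\Int{\strip_{\stInd}}=\bR\times(0,1)$ maps under $\pr\circ\qmap$ to an open connected set $A_\stInd\cong(0,1)$ (here $\qmap$ is injective on interiors and $\pr$ is open), these sets are pairwise disjoint, and their union is the whole regular locus; a partition of a space into disjoint open connected subsets is exactly its decomposition into connected components, so each component $\dComp$ equals a single $A_\stInd$ and $\qmap^{-1}(\pr^{-1}(\dComp))=\Int{\strip_{\stInd}}$. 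Finally $\phi:=\pr\circ\qmap|_{\Int{\strip_{\stInd}}}$ is a continuous open surjection whose fibres are the lines $\bR\times\{t\}$, hence factors as a homeomorphism $\bar\phi:(0,1)\to\dComp$ through the second coordinate; since $\gamma_\dComp$ is a section of that coordinate, $\pr\circ\qmap\circ\gamma_\dComp=\bar\phi$ is the desired homeomorphism.
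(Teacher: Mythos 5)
There is nothing in the paper to compare your route against: the lemma is stated with an immediate end-of-proof mark, and the sentence preceding it explicitly declares the statement easy and leaves it to the reader (with only a cf.\ pointer to an external lemma of an earlier paper), so your argument must be judged on its own merits --- and it holds up. Your organizing idea, openness of $\pr$, is the right crux, and the identity you actually establish, $\hcl{\leaf}=\pr^{-1}\bigl(\hcl{\pr(\leaf)}\bigr)$, is strictly stronger than the displayed equality $\pr(\hcl{\leaf})=\hcl{\pr(\leaf)}$ and makes $\pr(\PartitionSpec)=\specPoints$ purely formal, exactly as you say. Two small remarks. First, your flow-box justification of openness quietly uses that a seam point has a product flow box; this is true because gluings are along whole boundary intervals, and for striped surfaces one can also see openness directly: $U$ meets a seam $\qmap(\bdX_\bdGlueInd)=\qmap(\bdY_\bdGlueInd)$ if and only if $\qmap^{-1}(U)$ meets both $\bdX_\bdGlueInd$ and $\bdY_\bdGlueInd$, so the $\Partition_0$-saturation of $\qmap^{-1}(U)$ in $\preStripSurf$ is automatically $\qmap$-saturated, and within a single model strip the saturation of an open set is open by inspection. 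Second, the middle block is the one place you cite rather than prove: you import the leaf classification from the Remark, which is itself stated without proof, and the one fact there that genuinely needs checking is that every singular non-special leaf is of the whole-side self-gluing type --- equivalently, that a boundary interval which is a \emph{proper} subset of its side (glued or not) always yields a special leaf. The verification is short and is the same observation you use elsewhere: any saturated open neighborhood of such a leaf contains $\qmap(\bR\times(0,\eps))$ for some $\eps>0$ (through any fixed point of the interval all nearby interior lines pass), so its closure contains all the leaves arising from that side, and these are more than the leaf itself. Granting that, your type-by-type bookkeeping for the three displayed relations and your reduced-case argument --- the images $A_{\stInd}$ of strip interiors are disjoint, open, connected and cover the regular locus, hence are precisely its components, and the induced map $(0,1)\to\dComp$ is a continuous open bijection, hence a homeomorphism equal to $\pr\circ\qmap\circ\gamma_{\dComp}$ --- are complete and correct, and are at least as careful as what the paper expects of its reader.
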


\begin{subtheorem}[{\cite[Theorem~4.4]{MaksymenkoPolulyakh:PIGC:2017}}]\label{th:char_strip_surf}
Let $(\stripSurf,\Partition)$ be a foliated surface from class $\mathcal{F}$.
Then the following conditions are equivalent:
\begin{enumerate}[label={\rm(A\arabic*)}, start=4]
\item
the family $\PartitionSing$ of all singular leaves of $\Partition$ is locally finite;
\item\label{enum:A5}
$(\stripSurf,\Partition)$ is a striped surface.
\end{enumerate}
\end{subtheorem}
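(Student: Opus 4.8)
The plan is to prove the two implications separately. The substantial one is \ref{enum:A4}$\Rightarrow$\ref{enum:A5}, which builds a striped atlas out of the foliation; the reverse implication \ref{enum:A5}$\Rightarrow$\ref{enum:A4} is more routine, so I would treat it first. Assume $\qmap:\preStripSurf\to\stripSurf$ is a striped atlas, which by \eqref{equ:reduced_atlas} may be taken reduced, so that every singular leaf is the $\qmap$-image of a boundary interval of some model strip. Inside a single model strip the boundary intervals are pairwise disjoint open subarcs of $\bR\times\{0,1\}$ and hence form a locally finite family there. The task is to transport this across the quotient: given $\ptx\in\stripSurf$, the manifold structure provides a Euclidean or half-plane neighbourhood $\Vman$, and it suffices to bound the boundary intervals meeting $\qmap^{-1}(\Vman)$. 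Only finitely many strips can meet $\qmap^{-1}(\Vman)$ — otherwise infinitely many strips would accumulate at $\ptx$ and $\stripSurf$ would fail to be locally Euclidean and Hausdorff there — and within each such strip the boundary intervals are locally finite; hence $\PartitionSing$ is locally finite.

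For the main implication \ref{enum:A4}$\Rightarrow$\ref{enum:A5} I would construct a striped atlas band by band. By Lemma~\ref{lm:char_leaves} one has $\specPoints=\pr(\PartitionSpec)$, and as $\PartitionSpec\subset\PartitionSing$ is locally finite by \ref{enum:A4}, the set $\specPoints$ is locally finite in $\Yman$; hence $\Yman$ is a $T_1$ one-dimensional manifold and Lemma~\ref{lm:spec_pt_manif} applies. Put $\Yman':=\Yman\setminus(\partial\Yman\cup\specPoints)$; by Lemma~\ref{lm:spec_pt_manif}\ref{enum:lm:spec_pt_manif:A_ha_open_comp} its connected components $\dComp$ are open arcs or circles, and for each such $\dComp$ the saturated open set $\pr^{-1}(\dComp)$ consists entirely of regular leaves, every one homeomorphic to $\bR$ by \ref{enum:A2}. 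Patching over the base $\dComp$ the local product neighbourhoods supplied by regularity — which is unobstructed when $\dComp$ is an arc — I would show $\pr^{-1}(\dComp)$ is foliated homeomorphic to $\bR\times\dComp$ with leaves $\bR\times\{t\}$. For an arc this produces the interior $\bR\times(0,1)$ of a prospective model strip $\strip_\dComp$; a circle component produces an annulus or a twisted band, to be realized by a single strip glued to itself.

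The heart of the matter, and the step I expect to be the main obstacle, is to complete each band to an honest model strip and then to reassemble the pieces. For an arc component $\dComp$ I would examine the frontier $\overline{\pr^{-1}(\dComp)}\setminus\pr^{-1}(\dComp)$, which by local finiteness of $\PartitionSing$ consists of singular leaves; the crucial local claim is that each such leaf is approached by the band from exactly one side and attaches along a full open interval of $\bR\times\{0\}$ or $\bR\times\{1\}$, so that collecting these intervals upgrades $\strip_\dComp$ to a model strip and extends the trivialization to an attaching map $\strip_\dComp\to\stripSurf$. Taking $\preStripSurf=\bigsqcup_\dComp\strip_\dComp$ with $\qmap$ the union of the attaching maps, I would then verify the axioms of Definition~\ref{def:quasi-striped_atlas}: that $\qmap$ is a quotient map (the bands cover $\stripSurf$ off the locally finite set $\partial\stripSurf\cup\PartitionSing$, each singular leaf being the common limit of only finitely many bands), that it is injective off the boundary intervals, and that the boundary intervals sharing an image assemble into the families $\mathcal{X},\mathcal{Y}$ together with their gluing homeomorphisms. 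Throughout, it is exactly hypothesis \ref{enum:A4} that keeps the number of bands accumulating along any singular leaf finite, so that the gluing combinatorics near each leaf is a finite, controllable picture; proving the ``attaches from one side along a full interval'' claim rigorously from the class-$\mathcal{F}$ conditions \ref{enum:A2} and \ref{enum:A3} is where the genuine work lies. Once the atlas is in place its canonical foliation is manifestly $\Partition$, so $(\stripSurf,\Partition)$ is striped.
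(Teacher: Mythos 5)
First, a point of comparison: the paper does not prove Theorem~\ref{th:char_strip_surf} at all --- it imports it verbatim from \cite[Theorem~4.4]{MaksymenkoPolulyakh:PIGC:2017} --- so there is no internal proof to measure you against; your proposal must stand on its own, and in the main direction \ref{enum:A4}$\Rightarrow$\ref{enum:A5} it does not. The most serious defect is circularity. You invoke Lemma~\ref{lm:char_leaves} to get $\specPoints=\pr(\PartitionSpec)$ and, implicitly, to know that every leaf over $\Yman\setminus(\partial\Yman\cup\specPoints)$ is regular; but that lemma is stated for a striped atlas $\qmap:\preStripSurf\to\stripSurf$, which is exactly the object you are trying to construct. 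Likewise, Lemma~\ref{lm:spec_pt_manif} applies only to a $T_1$ space locally homeomorphic with $[0,1)$: that the leaf space of a class-$\mathcal{F}$ foliation is a (possibly non-Hausdorff) one-dimensional manifold is itself a nontrivial Haefliger--Reeb-type fact, nowhere derived by you from \ref{enum:A1}--\ref{enum:A4} (and the step from local finiteness of $\PartitionSing$ in $\stripSurf$ to local finiteness of $\specPoints$ in $\Yman$ also needs an argument via saturated neighbourhoods). So the band decomposition on which everything rests is, at that stage, unsupported.

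Second, the two analytic cores are asserted rather than proved: (a) that $\pr^{-1}(\dComp)$ is foliated homeomorphic to $\bR\times(0,1)$ --- ``patching is unobstructed over an arc'' is not an argument; one needs, \eg a global cross-section of the band, and the section $\gamma_{\dComp}$ one might borrow from Lemma~\ref{lm:char_leaves} again presupposes the atlas; and (b) the completion step, that the trivialization extends so that the band attaches to each frontier singular leaf along full open intervals of $\bR\times\{0,1\}$, with only finitely many bands meeting any such leaf. You yourself flag (b) as ``where the genuine work lies'' and leave it undone, yet it is precisely the content of the cited theorem; a plan that defers it is an outline, not a proof. The easy direction \ref{enum:A5}$\Rightarrow$\ref{enum:A4} is essentially right but contains two slips: reducedness is not granted by~\eqref{equ:reduced_atlas} (that is a definition; existence is Theorem~\ref{th:reduced_striped_surf}, for connected surfaces, with the cylinder/M\"obius exceptions) --- fortunately you do not need it, since for any striped atlas $\PartitionSing\subset\qmap(\partial\preStripSurf)$; and ``only finitely many strips meet $\qmap^{-1}(\Vman)$'' is simply false for an arbitrary Euclidean neighbourhood $\Vman$. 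The correct statement is that every point admits a sufficiently small neighbourhood --- a disk inside the image of one strip interior, or two half-disks glued along the seam through the point --- meeting at most two strips and only the boundary intervals through that point; this uses the easy but necessary observation that the boundary intervals of a model strip form a locally finite family \emph{in the strip}, their accumulation points in $\bR\times\{0,1\}$ lying outside the strip.
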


\begin{subtheorem}[{\cite[Theorem~3.7]{MaksymenkoPolulyakh:PGC:2015}}]\label{th:reduced_striped_surf}
Every connected striped surface $\stripSurf$ with countable base is foliated homeomorphic either to
\begin{enumerate}[label={\rm(\arabic*)}]
\item\label{enum:reduced_strip_surf:cyl}
an open cylinder $\Cman$ or an open M\"obius band $\Mman$ foliated by fibers of one-dimensional vector bundles $\Cman\to S^1$ and $\Mman\to S^1$ over the circle, or to
\item\label{enum:reduced_strip_surf:reduc}
a striped surface with reduced atlas.
\end{enumerate}
\end{subtheorem}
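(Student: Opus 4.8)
The plan is to begin with an arbitrary striped atlas $\qmap:\preStripSurf\to\stripSurf$ and to \emph{absorb} its regular seams, which by~\eqref{equ:reduced_atlas} and Lemma~\ref{lm:char_leaves} are the sole obstruction to being reduced: since every seam is an interior leaf, the atlas is reduced exactly when no seam is regular. The key local fact is the characterization~\ref{enum:regular_int_leaf}: a seam $\omega$ with $\qmap^{-1}(\omega)=\bdX_{\bdGlueInd}\cup\bdY_{\bdGlueInd}$ is regular if and only if $\bdX_{\bdGlueInd}=\bSide{\eps}{\strip_{\stInd}}$ and $\bdY_{\bdGlueInd}=\bSide{\eps'}{\strip_{\stInd'}}$ are \emph{entire} boundary sides $\bR\times\{\eps\}$ and $\bR\times\{\eps'\}$ of their strips. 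Thus a regular seam glues two strips (possibly a strip to itself) along full sides. I would encode these gluings in an auxiliary multigraph $\mathcal{G}$ whose vertices are the strips $\{\strip_{\stInd}\}_{\stInd\in\StInd}$ and whose edges are the regular seams, an edge joining the two strips it glues and being a loop when it glues a strip to itself. Because a regular seam consumes an entire side and a model strip has only the two sides $\bSide{0}{\strip}$ and $\bSide{1}{\strip}$, every vertex of $\mathcal{G}$ has degree at most two (a loop counting twice). Hence each connected component of $\mathcal{G}$ is a path (finite, one-sided infinite, or bi-infinite) or a cycle, a loop being a cycle of length one.

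Next I would dispose of the cyclic case. In a cycle component all sides of all its strips are consumed by the regular seams of the cycle, so none of these strips is glued to any strip outside the component; consequently the image of the component in $\stripSurf$ is a nonempty saturated subset which is both open and closed. As $\stripSurf$ is connected, this image is all of $\stripSurf$, and the surface is obtained by gluing the cyclically arranged strips along full sides. The result is a one-dimensional foliated line bundle over $\Circle$ whose leaves are the fibers: it is the open cylinder $\Cman$ when the monodromy accumulated around the cycle preserves the orientation of the leaves, and the open M\"obius band $\Mman$ when it reverses them. This is alternative~\ref{enum:reduced_strip_surf:cyl}.

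In the complementary case every component of $\mathcal{G}$ is a path, and I would merge each path into a single model strip. Gluing consecutive strips along full sides stacks them into $\bR\times J$ for an interval $J$; reparametrizing $J$ to $(0,1)$ (an infinite stack reparametrizing $[0,\infty)$ or $\bR$ to $[0,1)$ or $(0,1)$) and absorbing each gluing homeomorphism into a foliated self-homeomorphism $(x,t)\mapsto(f(x),t)$ of the next strip turns the stack into a genuine model strip $\bR\times(0,1)$, still carrying on its two outer sides precisely the boundary intervals not used by regular seams. Declaring these merged strips to be the new strip family and the surviving seams to be the new gluings yields a striped atlas for $(\stripSurf,\Partition)$ with no regular seams, hence reduced by~\eqref{equ:reduced_atlas}. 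This is alternative~\ref{enum:reduced_strip_surf:reduc}.

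The main obstacle is the orientation bookkeeping that separates the two alternatives and guarantees that a merged path really is a \emph{model} strip. Along a path the gluing maps can be normalized one by one to the identity by the reflections $(x,t)\mapsto(\pm x,t)$, so the merge is an honest product strip; around a cycle the same normalization leaves a residual reflection whose sign is a well-defined $\bZ_2$-valued monodromy, and this invariant is exactly the dichotomy between $\Cman$ and $\Mman$. One should also verify that the merging is compatible with the quotient topology of the atlas and that infinite paths reparametrize correctly, but these are routine once the degree-two structure of $\mathcal{G}$ is in hand.
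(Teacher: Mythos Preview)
The paper does not prove this theorem at all: it is quoted verbatim from \cite[Theorem~3.7]{MaksymenkoPolulyakh:PGC:2015} and used as a black box in Section~\ref{proof:th:ker_ahom0}. So there is no proof in the present paper to compare your attempt against.

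That said, your sketch is a plausible reconstruction of the argument. The degree-$\leq 2$ multigraph on strips with regular seams as edges is the natural combinatorial picture, and the dichotomy \emph{cycle component $\Rightarrow$ line bundle over $\Circle$} versus \emph{path component $\Rightarrow$ mergeable into a single model strip} is correct in spirit. A few points deserve more care than you give them. First, in the cycle case you should justify that the cycle exhausts $\stripSurf$: the reason is that every boundary side of every strip in the cycle is a single interval fully consumed by a regular seam, so these strips carry no other boundary intervals at all, hence no other seams, and their image is saturated, open, and closed. Second, in the path case the ``absorbing each gluing homeomorphism into a foliated self-homeomorphism of the next strip'' step must be done so that the surviving boundary intervals on the outer sides are carried along correctly; this is fine for a finite path but for an infinite path one has to check that the accumulated reparametrizations converge to a homeomorphism of the merged strip onto a genuine model strip (this is where the countable-base hypothesis is used). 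Third, you should note that after merging, the quotient map from the new family of model strips to $\stripSurf$ is still a quotient map in the sense of Definition~\ref{def:quasi-striped_atlas}\ref{enum:def:quasi-striped_atlas:1}; this is not automatic when infinitely many strips are merged and requires a short argument. None of these are fatal, but they are the places where a referee would ask for detail.
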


Notice that in the case~\ref{enum:reduced_strip_surf:cyl} all leaves of $\Partition$ are regular.
Also in the case~\ref{enum:reduced_strip_surf:reduc} the seams are precisely the singular leaves contained in $\Int{\stripSurf}$ and thus we have a canonical decomposition of $\stripSurf$ into strips.

\subsection{Characterization of the homeotopy group $\HZS$ for striped surfaces}
Let $\qmap: \preStripSurf \to \stripSurf$ be a striped atlas, and $\dif\in\HS$ a homeomorphism such that $\dif(\qmap(\strip_{\stInd})) = \qmap(\strip_{\stInd})$ for some $\stInd \in \StInd$.
Since $\qmap$ is injective on the interior of $\strip_{\stInd}$, as well as on each boundary component of $\strip_{\stInd}$, it follows that there exists a unique foliated homeomorphism $\predif:\strip_{\stInd} \to \strip_{\stInd}$ making commutative the following diagram:
\begin{equation}\label{equ:g_lift}
\begin{aligned}
\xymatrix{
\strip_{\stInd} \ar[r]^-{\qmap} \ar[d]_{\predif} & \qmap(\strip_{\stInd}) \ar[d]^-{\dif} \\
\strip_{\stInd} \ar[r]^-{\qmap}                  & \qmap(\strip_{\stInd})
}
\end{aligned}
\end{equation}
Moreover, see~\cite[Lemma~4.1]{MaksymenkoPolulyakh:PGC:2015}, $\predif(x, y) = \bigl(\flambda(x, y), \fmu(y) \bigr)$, where
\begin{itemize}[leftmargin=*]
\item $\fmu:[0,1]\to[0,1]$ is a homeomorphism;
\item for each $y\in(0,1)$ the map $\flambda_{y}:\bR\to\bR$, $\flambda_{y}(x) = \flambda(x,y)$, $x\in\bR$, is a homeomorphism;
\item for each $y\in\{0,1\}$ the map $\flambda_{y}:\bSide{y}{\strip_{\stInd}} \to \bSide{\fmu(y)}{\strip_{\stInd}}$, $\flambda_{y}(x) = \flambda(x,y)$, is a \myemph{monotone} homeomorphism between linearly ordered subsets (being unions of mutually disjoint open intervals) $\bSide{y}{\strip_{\stInd}}$ and $\bSide{\fmu(y)}{\strip_{\stInd}}$.
\end{itemize}

\begin{subtheorem}[{\cite[Theorem~4.4]{MaksymenkoPolulyakh:PGC:2015}}]\label{th:charact_HidZDelta}
Let $\qmap:\preStripSurf\to\stripSurf$ be a reduced striped atlas such that the family $\PartitionSpec$ of all special leaves of the canonical foliation $\Partition$ is locally finite.
Let also $\dif\in\HS$.
Then $\dif\in\HZS$ iff the following conditions hold:
\begin{enumerate}[label={\rm(Z\Alph*)}, leftmargin=*, itemsep=1ex]
\item\label{enum:charact_HidZP:1}
$\dif$ preserves each leaf $\omega \in \partial\stripSurf \cup \PartitionSing$ with its orientation;
\item\label{enum:charact_HidZP:2}
$\dif(\qmap(\strip_{\stInd})) = \qmap(\strip_{\stInd})$ for all $\stInd\in\StInd$, and if $\predif=(\flambda,\fmu):\strip_{\stInd} \to \strip_{\stInd}$ is a unique lifting of $\dif|_{\strip_{\stInd}}$, see~\eqref{equ:g_lift}, then $\fmu$ and each $\flambda_{y}$, $y\in[0,1]$, is strictly increasing.
\end{enumerate}
\end{subtheorem}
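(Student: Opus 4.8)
The plan is to prove both implications by passing between the surface $\stripSurf$ and its space of leaves $\Yman$, and then, for the harder \emph{sufficiency} direction, to build the required isotopy strip by strip in three stages. For \emph{necessity}, suppose $\dif\in\HZS$ and fix a foliated isotopy $\Hmap:[0,1]\times\stripSurf\to\stripSurf$ with $\Hmap_0=\id$ and $\Hmap_1=\dif$. As $[0,1]$ is locally compact, $\id\times\pr$ is a quotient map, so $\Hmap$ descends to a continuous isotopy $\kdif_t=\ahom(\Hmap_t)$ of $\Yman$; hence $\kdif:=\ahom(\dif)\in\HZY$ by Corollary~\ref{cor:pi0_homeo}. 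By Lemma~\ref{lm:spec_pt_manif}\ref{enum:lm:spec_pt_manif:isotopies}, $\kdif$ fixes $\Tman=\partial\Yman\cup\specPoints$ pointwise and preserves each component $\dComp$ of $\Yman\setminus\Tman$ with orientation; moreover the argument there shows that the \emph{whole} projected isotopy fixes $\Tman$ pointwise and preserves each $\dComp$ throughout. Translating through $\pr$ and Lemma~\ref{lm:char_leaves}: the first fact says $\dif$ fixes each leaf of $\partial\stripSurf\cup\PartitionSing$ setwise, and the second, read off the parametrisation $\pr\circ\qmap\circ\gamma_{\dComp}$, says exactly that $\dif(\qmap(\strip_{\stInd}))=\qmap(\strip_{\stInd})$ and that each $\fmu$ is strictly increasing. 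It remains to control orientation \emph{along} leaves. For $\leaf\in\partial\stripSurf\cup\PartitionSing$ we have $\Hmap_t(\leaf)=\leaf$ for all $t$, so $\Hmap_t|_{\leaf}$ is a path in $\Homeo(\leaf)$ from $\id$ to $\dif|_{\leaf}$, forcing $\dif|_{\leaf}$ to be orientation-preserving; this is~\ref{enum:charact_HidZP:1}. For an interior leaf $\leaf_y=\qmap(\bR\times y)$ the map $\Hmap_t$ preserves its strip and carries $\leaf_y$ to a leaf of that strip, and the resulting $\bZ_2$-valued orientation behaviour of $\Hmap_t$ along $\leaf_y$ is continuous in $t$, hence constant and trivial, so each $\flambda_y$ is increasing, completing~\ref{enum:charact_HidZP:2}.

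For \emph{sufficiency}, assume~\ref{enum:charact_HidZP:1} and~\ref{enum:charact_HidZP:2}. By the defining equality~\eqref{equ:reduced_atlas} of a reduced atlas, every boundary side of every strip is a union of leaves of $\partial\stripSurf\cup\PartitionSing$, so~\ref{enum:charact_HidZP:1} forces $\fmu(0)=0$, $\fmu(1)=1$ and makes each $\flambda_{\eps}$ map every boundary interval of $\bSide{\eps}{\strip_{\stInd}}$ onto itself, increasingly. \textbf{Stage 1 (kill the transverse part).} On each strip pick a path $\fmu_s$ from $\fmu_0=\fmu$ to $\fmu_1=\id$ in the convex, hence contractible, space of increasing homeomorphisms of $[0,1]$ fixing $\{0,1\}$, and set
\[
\Hmap_s(\ptx,y)=\bigl(\flambda(\ptx,\fmu^{-1}(\fmu_s(y))),\,\fmu_s(y)\bigr).
\]
Each $\Hmap_s$ is a foliated self-homeomorphism of the strip, and since $\fmu_s$ fixes $\{0,1\}$ we get $\Hmap_s(\ptx,\eps)=(\flambda_{\eps}(\ptx),\eps)$ for all $s$; thus the isotopy is \emph{stationary on the boundary sides} and descends to an isotopy on $\stripSurf$ from $\dif$ to a \emph{leaf-preserving} $\dif_1$ whose lift is $(\flambda_1(\ptx,y),y)$ with every $\flambda_{1,y}$ increasing and $\flambda_{1,\eps}=\flambda_{\eps}$.

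It remains to contract $\flambda_1$ to the identity, and here lies the \textbf{main difficulty}: the obvious convex-combination homotopy $(1-s)\ptx+s\flambda_1(\ptx,y)$ does \emph{not} descend to $\stripSurf$, because on a seam the two strip parametrisations differ by the gluing homeomorphism $\phi_{\bdGlueInd}$, which is monotone but not affine and so fails to commute with convex combinations. I therefore first remove the obstruction on the seams. \textbf{Stage 2 (straighten on the seams).} Since $\PartitionSing$ is locally finite, choose pairwise disjoint saturated collars of the leaves of $\partial\stripSurf\cup\PartitionSing$; inside the collar of a boundary interval $(a,b)\subset\bSide{\eps}{\strip_{\stInd}}$ deform $\flambda_1$ through increasing homeomorphisms so that at $y=\eps$ it becomes $\id_{(a,b)}$ while tapering back to $\flambda_1$ at the inner edge. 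Carrying out the deformation of each leaf \emph{intrinsically on that leaf}, and only then extending it into the adjoining collar(s) on one or both sides, guarantees agreement across every seam, so these local isotopies assemble into a foliated isotopy from $\dif_1$ to a leaf-preserving $\dif_2$ whose lift $(\flambda_2(\ptx,y),y)$ still has each $\flambda_{2,y}$ increasing but now satisfies $\flambda_{2,\eps}=\id$ on every boundary side. \textbf{Stage 3 (convex contraction).} Finally put $\flambda_2^s(\ptx,y)=(1-s)\flambda_2(\ptx,y)+s\,\ptx$; each $\flambda_2^s$ is increasing in $\ptx$, and $\flambda_{2,\eps}=\id$ gives $\flambda_2^s(\ptx,\eps)=\ptx$ for all $s$, so this homotopy is stationary on the seams and descends to an isotopy on $\stripSurf$ from $\dif_2$ to $\id_{\stripSurf}$. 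Concatenating the three stages exhibits $\dif\in\HZS$. The only genuinely delicate point is Stage 2, whose sole purpose is to decouple the non-affine gluings from the contraction of Stage 3.
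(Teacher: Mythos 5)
You should know at the outset that the paper contains no proof of this statement to compare against: Theorem~\ref{th:charact_HidZDelta} is imported verbatim from the earlier paper \cite[Theorem~4.4]{MaksymenkoPolulyakh:PGC:2015}, so your argument stands or falls on its own. Judged that way, it is essentially sound. The necessity direction is a nice economy: descending the isotopy to $\Yman$ (using that $\id\times\pr$ is a quotient map since $[0,1]$ is locally compact), invoking Lemma~\ref{lm:spec_pt_manif}\ref{enum:lm:spec_pt_manif:isotopies} together with Lemma~\ref{lm:char_leaves} to get strip-invariance and increasing $\fmu$, and then handling orientation \emph{along} leaves by the continuity-of-sign argument, which works because the projected isotopy fixes $\Tman$ and preserves each component $\dComp$ for \emph{all} $t$, so $\Hmap_t$ preserves each singular leaf and each strip throughout. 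For sufficiency, you correctly isolate the genuine obstruction — the fiberwise convex contraction does not commute with the non-affine gluing homeomorphisms $\phi_{\bdGlueInd}$ — and your three-stage plan (transverse straightening rel boundary sides; making $\dif$ the identity on every boundary interval; then Alexander contraction rel seams) is a workable resolution.

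One claim in Stage 2 is false as written: pairwise disjoint \emph{saturated} collars of the leaves of $\partial\stripSurf\cup\PartitionSing$ do not exist whenever some boundary side $\bSide{\eps}{\strip_{\stInd}}$ contains more than one boundary interval, because any saturated open neighbourhood of a boundary interval of $\bSide{0}{\strip_{\stInd}}$ must contain entire leaves $\bR\times\{y\}$ for all small $y>0$, and these leaves lie in the saturated neighbourhood of \emph{every} interval on that side; local finiteness of $\PartitionSing$ does not help. Fortunately your construction never actually uses saturation, so the repair is immediate: take the non-saturated collars $(a,b)\times[0,\delta)$ (resp.\ $(a,b)\times(1-\delta,1]$) of the individual boundary intervals inside each strip, which are automatically pairwise disjoint; extend the intrinsically chosen leaf isotopy $g_s$ of $(a,b)$ by the identity on $\bR\setminus(a,b)$ — an increasing bijection of $\bR$, hence a homeomorphism, and jointly continuous in $(s,x)$ because a jointly continuous monotone family of surjections of an interval tends to the endpoints uniformly in $s$ — and taper with a bump function in $y$. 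The resulting leaf-preserving ambient isotopies are supported in these collars, and across a seam the two extensions restrict to the single leaf isotopy chosen on that leaf, exactly as you intend, so they assemble on $\stripSurf$. With this correction (and the routine quotient-map verification that the strip-by-strip families descend to continuous isotopies of $\stripSurf$, which you gesture at), the proof is complete.
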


\section{Proof of Theorem~\ref{th:ker_ahom0}}\label{proof:th:ker_ahom0}
Let $(\stripSurf,\Partition)$ be a connected foliated surface from class $\mathcal{F}$ satisfying condition~\ref{enum:A4}.
By Theorem~\ref{th:char_strip_surf}, the latter condition is equivalent to the assumption~\ref{enum:A5} that $(\stripSurf,\Partition)$ is a striped surface, \ie there exists a striped atlas $\qmap:\preStripSurf\to\stripSurf$ such that $\Partition$ is its canonical foliation.
We need to prove that $\ahom(\HZS)= \HZY$ and that the kernel of the induced homomorphism $\ahom_0:\pi_0\HZS\to\HZY$ is either trivial or isomorphic to $\bZ_2$.

First suppose that $\stripSurf$ is connected.
Then due to Theorem~\ref{th:reduced_striped_surf} we have the following three cases.

1) $\stripSurf=\bR\times\Circle$ is an open cylinder, $\Yman=\Circle$ is the circle, $\pr:\bR\times \Circle\to \Circle$ is given by $\pr(x,y) = y$, and $\Partition$ consists of lines $\bR\times\{y\}$, $y\in\Circle$.
Then every $\dif\in\HS$ is given by $\dif(x,y) = (\flambda_{\dif}(x,y), \fmu_{\dif}(y))$, where $\fmu_{\dif}:\Circle\to\Circle$ is some homeomorphism, and $\flambda_{\dif}:\bR\times\Circle\to\bR$ is a continuous map such that the correspondence $x\mapsto\flambda_{\dif}(x,y)$ is a homeomorphism $\bR\to\bR$ which preserves or reverses orientation mutually for all $y\in\Circle$.
In particular, we have a homomorphism $\ori:\HS \to \bZ_2\oplus\bZ_2$, $\ori(\dif) = (\ori(\alpha_{\dif}), \ori(\beta_{\dif}))$, where $\ori(\beta_{\dif})=0$ if $\beta_{\dif}$ preserves orientation, $\ori(\beta_{\dif})=1$ otherwise, and similarly for $\alpha_{\dif}$.

Evidently $\ahom(\dif) = \fmu_{\dif}$.
This also implies that if $\fmu\in\HY$ and $\dif\in\HS$ is given by $\dif(x,y) = (x, \fmu(y))$, then $\ahom(\dif)=\fmu$.
Hence $\ahom$ is surjective.

It is well-known that $\HZY$ consists of homeomorphisms preserving orientation of $\Yman$.
Also, see \eg \cite[Examples~6.5]{MaksymenkoPolulyakhSoroka:PICG:2016}, one easily checks that $\dif\in\HZS$ iff $\fmu_{\dif}$ and the correspondences $x\mapsto\flambda_{\dif}(x,y)$ preserve orientations.
In particular, we get the following commutative diagram:
\[
\xymatrix{
\HZS \ar@{^(->}[r] \ar[d]_-{\ahom} &
\HS \ar[d]_-{\ahom:\dif\mapsto\beta_{\dif}} \ar@{->>}[r]^-{\ori} &
\bZ_2 \oplus \bZ_2 \ar[d]_-{(a,b)\mapsto b} \ar[r]^-{\cong} &
\pi_0\HS \ar[d]^{\ahom_0} \\
\HZY \ar@{^(->}[r] &
\HY  \ar@{->>}[r]^-{\ori} &
\bZ_2 \ar[r]^-{\cong} &
\pi_0\HY
}
\]
This implies Theorem~\ref{th:ker_ahom0} for this case.

2) The proof of Theorem~\ref{th:ker_ahom0} for the case when $\stripSurf$ is an open M\"obius band and $\pr:\stripSurf\to\Circle$ is a locally trivial fibration with fiber $\bR$ is similar, and we leave if to the reader.

3) In all other cases due to Theorem~\ref{th:reduced_striped_surf} one can assume that $\qmap$ is a reduced striped atlas.

a) First we prove that $\ahom(\HZS) \subset \HZY$ which will imply that $\ahom$ induces a homomorphism $\ahom_0:\pi_0 \HZS \to \pi_0\HZY$.

Let $\dif\in\HZS$ and $\kdif=\ahom(\kdif)$.
Thus $\dif$ satisfies conditions~\ref{enum:charact_HidZP:1} and~\ref{enum:charact_HidZP:2} of Theorem~\ref{th:charact_HidZDelta}, and we should check that $\kdif\in\HZY$, \ie it satisfies conditions~\ref{enum:cond_Hid:z} and~\ref{enum:cond_Hid:C} of Lemma~\ref{lm:spec_pt_manif}.

Due to~\ref{enum:charact_HidZP:1}, $\dif$ preserves each leaf $\omega\subset\partial\stripSurf \cup \PartitionSing$, whence $\kdif$ fixes each point $y\in \partial\Yman\cup\specPoints = \pr(\partial\stripSurf \cup \PartitionSing)$, see Lemma~\ref{lm:char_leaves}, \ie $\kdif$ satisfies condition~\ref{enum:cond_Hid:z}.

Furthermore, let $\dComp$ be a connected component of $\Yman\setminus(\partial\Yman\cup\specPoints)$.
Then by Lemma~\ref{lm:char_leaves}, $\qmap^{-1}(\pr^{-1}(\dComp))$ is the interior of some model strip $\strip_{\stInd}$.
Moreover, by property~\ref{enum:charact_HidZP:2}, $\dif(\qmap(\strip_{\stInd}))=\qmap(\strip_{\stInd})$, and $\dif$ lifts to a homeomorphism $\dif_0=(\fmu,\flambda):\strip_{\stInd}\to\strip_{\stInd}$ in which $\fmu$ is strictly increasing.
But the latter is equivalent to the assumption that $\kdif$ preserves orientation of $\dComp$, \ie satisfies condition~\ref{enum:cond_Hid:C}.

\smallskip

b) Let us show that $\ahom(\HZS)=\HZY$.

Let $\kdif\in\HZY$.
We should find $\dif\in\HZS$ such that $\ahom(\dif)=\kdif$.
In fact we will define a homeomorphism $\dif_0:\preStripSurf\to\preStripSurf$ which will induce the desired $\dif$.

Let $\dComp$ be a connected component of $\Yman\setminus\Tman$, so $r^{-1}(\dComp)$ is the interior of some model strip $\strip_{\stInd}$.
Then by Lemma~\ref{lm:char_leaves}, the map $\gamma:(0,1)\to\dComp$, $\gamma(y) = r(0,y)$, $y\in(0,1)$, is a homeomorphism.
Since by~\ref{enum:cond_Hid:C}, $\kdif(\dComp)=\dComp$ and $\kdif$ preserves its orientation, the map
\[ \fmu = \gamma^{-1}\circ\kdif\circ\gamma:(0,1)\to(0,1) \]
is an orientation preserving homeomorphism of $(0,1)$.
Therefore $\fmu$ extends to a self-homeomorphism $\fmu:[0,1]\to[0,1]$ by setting $\fmu(0)=0$ and $\fmu(1)=1$, and we define $\dif_0:\strip_{\stInd}\to\strip_{\stInd}$ by $\dif_0(x,y) = (x,\fmu(y))$, so it is also fixed on $\strip_{\stInd}$.

It now follows that $\dif_0$ induces a homeomorphism $\dif\in\HZS$ fixed on $\partial\stripSurf\cup\PartitionSing$ and such that $\dif_0\circ\qmap=\qmap\circ\dif$.
Then $\dif$ in turn induces a homeomorphism $\kdif' = \ahom(\dif):\Yman\to\Yman$ fixed on $\pr(\partial\stripSurf\cup\PartitionSing) = \partial\Yman\cup\specPoints$ and coinciding with $\kdif$ on the complement $\Yman\setminus(\partial\Yman\cup\specPoints)$.
But due to~\ref{enum:cond_Hid:z}, $\kdif$ is also fixed on $\partial\Yman\cup\specPoints$, whence $\kdif=\kdif'=\ahom(\dif)$.

\smallskip

c) Denote $\invHZY := \ahom^{-1}(\HZY)$.
Then
\begin{multline*}
\ker\Bigl[
    \ahom_0: \pi_0\HS \to \pi_0\HY
\Bigr] =
\ker\left[
    \ahom_0: \frac{\HS}{\HZS} \to \frac{\HY}{\HZY}
\right] \cong \\ \cong
\frac{\ahom^{-1}(\HZY)}{\HZS} = \invHZY / \HZS.
\end{multline*}
Hence it remains show that $\invHZY / \HZS$ is either trivial or isomorphic with $\bZ_2$.

Let $\dif\in\invHZY$, $\kdif=\ahom(\dif)\in\HZY$, and $\dif_0:\preStripSurf\to\preStripSurf$ be its unique lifting.
Then $\dif_0$ preserves every boundary component of $\preStripSurf$ and $\dif_0(\strip_{\stInd})=\strip_{\stInd}$ for all $\stInd\in\StInd$.
Moreover, $\dif_0:\Int{\strip_{\stInd}}=\bR\times(0,1)\to\bR\times(0,1)=\Int{\strip_{\stInd}}$ is given by $\dif_0(x,y) = (\flambda^{\stInd}(x,y), \fmu^{\stInd}(y))$, where $\fmu^{\stInd}$ is increasing, while $\flambda^{\stInd}_{y}$ is either increasing or decreasing mutually for all $y\in(0,1)$.

Therefore one can associate to each $\stInd$ an element $\ori_{\dif}(\stInd)\in\bZ_2$ so that $\ori_{\dif}(\stInd)=0$ (resp.\ $\ori_{\dif}(\stInd)=1$) if all $\flambda^{\stInd}_{y}$ are increasing (resp.\ decreasing).

Note that if two strips $\strip_{\stInd}$ and $\strip_{\stInd'}$ are glued along some of their boundary components, then $\ori_{\dif}(\stInd)=\ori_{\dif}(\stInd')$, whence $\ori_{\dif}$ is constant on the connected components of $\stripSurf$.

But $\stripSurf$ is connected, whence $\ori_{\dif}$ is constant.
Therefore, one can associate to each $\dif\in\invHZY$ a well-defined element $\ori_{\dif}\in\bZ_2$.
It is easy to checks that the correspondence $\dif\mapsto\ori_{\dif}$ is a homomorphism $\ori:\invHZY\to\bZ_2$.
Finally, $\dif\in\ker(\ori)$ iff $\dif$ satisfies conditions~\ref{enum:charact_HidZP:1} and~\ref{enum:charact_HidZP:2}, \ie $\dif\in\HZS$.

Therefore $\ker(\ahom_0) = \invHZY / \HZS$ is either trivial or $\bZ_2$.
This completes Theorem~\ref{th:ker_ahom0}.

\bibliographystyle{plain}
\bibliography{striped_surfaces_spaces_of_leaves}

\end{document}